\newtheorem{theorem}{Theorem}[section]
\theoremstyle{plain}
\newtheorem{proposition}[theorem]{Proposition}
\newtheorem{observation}[theorem]{Observation}
\newtheorem{ex}{Example}
\author{Gerold J\"ager \\ gerold.jager@umu.se 
	\and Klas Markstr\"om \\ klas.markstrom@umu.se 
	\and Denys Shcherbak \\ denys.shcherbak@umu.se
	\and Lars-Daniel \"Ohman \\ lars-daniel.ohman@umu.se 
	}
\begin{document}

\title{Triples of Orthogonal Latin and Youden Rectangles For Small Orders}

\maketitle

\begin{abstract}
	We have performed a complete enumeration of non-isotopic triples of
	mutually orthogonal 
	$k\times n$ Latin rectangles for $k\leq n \leq 7$. Here we will present 
	a census of such triples, classified by various properties, including 
	the order of the autotopism group of the triple. As part of this we have 
	also achieved the first enumeration of pairwise orthogonal triples of Youden 
	rectangles.
	We have also studied orthogonal triples of $k \times 8$ rectangles which 
	are formed by extending	mutually orthogonal triples with non-trivial 
	autotopisms one row at a time, and requiring that the autotopism group
	is non-trivial in each step. This class includes a triple coming from 
	the projective plane of order 8. Here we find a remarkably symmetrical pair of 
	triples of $4 \times 8$ rectangles, formed by juxtaposing two selected copies 
	of complete sets of MOLS of order 4.
\end{abstract}

\section{Introduction}

Sets of mutually orthogonal Latin squares (MOLS) are well studied objects in 
both pure combinatorics and statistical design theory, see e.g.\ Chapter 10 of  
\cite{HK}, with a history going at least all the way back to Euler's 36 officers 
problem and the conjectures he made based on his studies of that problem. In the 
present paper we will study a variation on this topic, namely sets of mutually 
orthogonal Latin rectangles. For $k\leq n $, a Latin rectangle is a $k \times n$ 
array using $n$ 
different symbols, such that each row is a permutation of the symbols, and all 
symbols in a column are distinct. In particular, an $n \times n$ Latin rectangle 
is a Latin square. The concept of orthogonality in Latin squares can be naturally 
extended to rectangles by saying that the rectangles $R$ and $L$ are orthogonal 
if all the ordered pairs of symbols $(R_{ij},L_{ij})$ are distinct. More 
specifically, the aim has been to classify all non-isotopic triples of MOLS and 
mutually orthogonal Latin rectangles (MOLR) for as large $n$ as possible, which 
turns out to be $n=7$.

One reason for choosing to work with triples is a fundamental question in the 
theory of mutually orthogonal Latin squares: How large can a set of MOLS be for 
a given $n$?  The  maximum size of such a set is here denoted by $N(n)$. 
Thanks to the now classical results by Bose, Tarry and others \cite{constr, 
disprove, Ta} we know that for $n\neq 2,6$ there exist MOLS, i.e. 
$N(n)\geq 2$ for $n\neq 2,6$. It was also proven by Chowla, {Erd\H{os}} and  
Straus \cite{CES} that there exist constants $C, \alpha>0$ such that $N(n)\geq C 
n^{\alpha}$ for large enough $n$.  This result has been improved in various 
ways over the years and recently  Barber, K\"uhn, Lo, Osthus and Taylor \cite{BKLO} 
proved that one can prescribe a substantial number of entries in $r$ partial 
Latin squares and still be able to extend these partial Latin squares to a set of 
$r$ MOLS, where $r$ grows with $n$. 

For small $n$, a sequence of results, culminating with Todorov \cite{To}, lead 
to the result that $N(n)\geq 3$ unless $n=2, 3, 6$ and possibly $n=10$.   The 
open case  $n=10$ is of special interest due to its connection to the existence 
of finite projective planes; a finite projective plane of order $n$ is 
equivalent to a set of $n-1$ MOLS. It is believed 
that a projective plane only exists for orders which are prime powers and the first 
open case for the conjecture was for a long time $n=10$. This case was settled 
in 1989 \cite{LTS} by a combination of methods from coding theory and an 
extensive computer search, an effort well surveyed in \cite{Lam}. That result in 
fact shows that $N(10)\leq 6$, but the result would of course itself follow if 
in fact $N(10)=2$. A number of authors have  attempted to construct triples of 
MOLS for $n=10$ and the latest large scale such effort \cite{MMM} proved that if 
such a triple exists, then each of the three Latin squares involved must have a trivial 
symmetry group. For Latin rectangles it is however trivial that triples of 
orthogonal $1\times n$ rectangles exist, and it is not hard to show that triples of 
mutually orthogonal $2 \times n$ rectangles exist if $n \geq 3$, as noted in \cite{Agg}. 
One reason for studying triples of MOLR is therefore that if there is no triple of 
MOLS of order $10$, there must be a largest $k<10$ such that a triple of mutually 
orthogonal $k\times 10$ rectangles does exist.

In a more general perspective, with the double aim of finding rectangular 
orthogonal arrays for design purposes and getting a better understanding of the 
existence of MOLS we have generated and classified all orthogonal triples of 
MOLR up to $n=7$. For each 
fixed $n$, the number of non-isotopic $k \times n$ triples turns out to be a 
unimodal function of $k$, and even for $n=6$, where no pair of orthogonal 
Latin squares exists, there are several examples of triples of MOLR with $k=5$.    
For each value of 
$n$ we have also counted the number of non-isotopic maximal triples, i.e., 
triples which 
cannot be extended by one more row. These results are described in Subsection 
\ref{ssec_triples}. Further, as described in Subsection \ref{ssec_auto}, we have computed 
the autotopism groups of the orthogonal triples. Here we see a clear change in 
behaviour when $n$ increases. For $n \leq 5$ the autotopism groups are always 
non-trivial, but at $n=6$ we see the first examples with no autotopisms, and 
for $n=7$ this is the dominant case.

The paper is structured as follows. In Section~\ref{sec_not} we give notation and 
formal definitions. In Section~\ref{sec_problem} we state the questions guiding our 
investigation, describe briefly the algorithm 
used to find all triples of MOLR and give some practical information regarding the 
computer calculations. In Subsection~\ref{ssec_triples} we 
present the data our computer search resulted in, in particular the number of 
non-isotopic triples of MOLR for each order. By means of analysis of the data produced, 
we discuss the extendability of the triples. In Subsection~\ref{ssec_youden} we 
discuss which of the triples found satisfy the stronger condition of being Youden 
rectangles and in Subsection~\ref{ssec_auto} we discuss the autotopism groups of 
the triples. Up to order $n=7$ our enumeration is complete, and in 
Section~\ref{sec_larger} we discuss our enumeration of triples of order 
$n=8$ with the added condition that they sequentially for each $k$ have non-trivial 
autotopism groups.

\section{Notation and Definitions}
\label{sec_not}

A \emph{Latin square} is an $n\times n$ matrix with cells filled by $n$ symbols 
such that each row and each column contains a specific symbol exactly once. 
For $ k \leq n $ a matrix with $k$ rows and $n$ columns whose cells are filled by $n$ symbols such 
that each row contains each symbol exactly once and each column contains each 
symbol at most once is called a $k\times n$ \emph{Latin rectangle}. 
In the following we use as symbol set $\{0,1,\dots,n-1\}$.
We denote the $t$-th row of a $k\times n$ rectangle $A$ by $A_t$ for 
$t\in\{1,2,\dots, k\}$. 

Let $A=(\alpha_{i,j})_{1\leq i,j,\leq n}$ and $B=(\beta_{i,j})_{1\leq i,j,\leq 
n}$ be Latin squares. We say that $A$ and $B$ are \emph{mutually orthogonal Latin 
squares} if the set of ordered pairs $\{(\alpha_{i,j}, \beta _{i,j})\, |\, 
i,j \in\{1,2\dots n\}\}$ contains all possible ordered pairs, or, in other words, if 
each ordered pair $(\alpha_{i,j}, \beta _{i,j})$ appears exactly once. A set of 
Latin squares of order $n$ is called a set of \emph{Mutually Orthogonal Latin 
Squares (MOLS)} if each square is orthogonal to every other square in the set. 
Similarly, we can extend the orthogonality condition to Latin rectangles. We 
say that Latin rectangles of size $k\times n$ $A=(\alpha_{i,j})_{ 1\leq i\leq 
k,\, 1\leq j\leq n}$ and $B=(\beta_{i,j})_{ 1\leq i\leq k,\, 1\leq j\leq n}$ are 
orthogonal if each ordered pair $(\alpha_{i,j}, \beta _{i,j})$  appears at most 
once. Also, a set of pairwise orthogonal Latin rectangles is called a set of
\textit{Mutually Orthogonal Latin Rectangles (MOLR)}.

There are many different notions of `equivalence' of Latin squares, Latin 
rectangles, mutually orthogonal Latin squares, and mutually orthogonal Latin 
rectangles (see, for example, \cite{LaywineMullen} and \cite{WanlessEgan}).
In the present paper, we use the equivalence notion \emph{isotopism}: Two 
triples $T_1$ and $T_2$ of MOLR are said to be \emph{isotopic} if $T_2$ can be 
gotten from $T_1$ by permuting the order of the three rectangles, by permuting 
rows (jointly in all three rectangles in $T_1$), permuting columns (jointly in 
all three rectangles in $T_1$), and by permuting symbols (separately in each of 
the three rectangles in $T_1$). It should be noted that all these permutations
will preserve the orthogonality condition, and that there are no further
obvious transformations that will.

Considering that our main focus in the present paper is Latin rectangles, 
we have chosen not to use a stronger concept of `equivalence' sometimes used for
Latin \emph{squares}, where interchanges of the roles of rows, columns, and symbols 
are also allowed. In a Latin rectangle, rows, columns and symbols do not play the 
same role, so such interchanges would not result in Latin rectangles. In the special
case of $k=n$, which we also study, applying this stronger notion of `equivalence' 
is meaningful, and would result in so-called \emph{main classes}, or \emph{species}, 
of MOLS. This enumeration, however, has already been done in \cite{WanlessEgan} for 
all the $n$ included in our study, and so we do not repeat it here.

We will also be interested in \emph{how many} triples of MOLR are isotopic, which motivates the 
following slightly different presentation and additional terminology.
Let $(A,B,C)$ be a triple of MOLR of size $k\times n$. The following group of 
isotopisms acts on the set of triples of MOLR: $G_{n,k}=S_3\times S_k\times 
S_n \times [S_n \times S_n \times S_n]$, where $S_3$ corresponds to a permutation 
of the rectangles, $S_k$ corresponds to a permutation of the rows, $S_n$ corresponds 
to a permutation of the columns, and each of the last three $S_n$ correspond to a 
permutation of the symbols in a single rectangle. Two triples  $(A,B,C)$ and 
$(A^\prime,B^\prime,C^\prime)$ of MOLR of size $k\times n$ are isotopic if 
there exists a $g\in G_{n,k}$ such that  
$g(A,B,C)=(A^\prime,B^\prime,C^\prime)$. The \emph{autotopism group} of a 
triple $(A,B,C)$ is defined as  $Aut(A,B,C):=\{g\in G_{n,k}\, |\, 
g(A,B,C)=(A,B,C)\}$.
 
Since each row of a Latin square or a Latin rectangle can be seen as a 
permutation, sometimes we refer to rows as permutations.
We say that a permutation $\sigma\in S_n$, where $S_n$ denotes the symmetric group 
on $n$ elements, is \textit{lexicographically} smaller 
than a permutation $\pi\in S_n$ and write $\sigma<\pi$, if $\sigma(i)<\pi(i)$, where 
$i$ is the first position with $\sigma(i)\neq\pi(i)$. 
The lexicographical comparison can be extended to triples of MOLR as follows.
Let $(A,B,C)$ and $(A^\prime,B^\prime,C^\prime)$ be triples of $k\times n$ MOLR. 
Furthermore, let $A$, $B$ and $C$ consist of permutations $\sigma_{i,s}$, and 
$A^\prime$, $B^\prime$, $C^\prime$ consist of permutations 
$\pi_{i,s}$, respectively, where $\sigma_{i,s},\pi_{i,s}\in 
S_n$ for $i\in\{1,2\dots k\}$ and $s\in\{1,2,3\}$. The sequence of triples of 
permutations $\{(\sigma_{1,1},\sigma_{1,2},\sigma_{1,3}), 
(\sigma_{2,1},\sigma_{2,2},\sigma_{2,3}),\dots,(\sigma_{k,1},\sigma_{k,2},\sigma_{k,3})\}$ 
describes $(A,B,C)$ and $\{(\pi_{1,1},\pi_{1,2},\pi_{1,3}), 
(\pi_{2,1},\pi_{2,2},\pi_{2,3}),\dots,(\pi_{k,1},\pi_{k,2},\pi_{k,3})\}$ 
describes  $(A^\prime,B^\prime,C^\prime)$.
We say that a triple of rectangles $(A,B,C)$ is lexicographically smaller than 
$(A^\prime,B^\prime,C^\prime)$ if $\sigma_{i,s}<\pi_{i,s}$, where $i,s$ are the 
first indices in the sequences with $\sigma_{i,s}\neq\pi_{i,s}$. 
In other words, we compare triples of MOLR by rows.

We call a triple of MOLR \textit{normalized} if it satisfies the 
following conditions:

\begin{itemize}
	\item[(S1)] (Ordering among columns) 
		The first row of each rectangle is the identity permutation.
	\item[(S2)] (Ordering among rectangles) 
		The second row of the first rectangle is lexicographically larger 
		than the second row of the second rectangle, 
		and the second row of the second rectangle is larger 
		than the second row of the third rectangle. 
		In other words, if $a_1,a_2,a_3$ are symbols on the 
		positions (2,1) in the ordered triple, then it holds that $a_1>a_2>a_3$.
		Note that $a_1$, $a_2$, $a_3$ are pairwise 
		distinct, since all the ordered pairs with 
		the same symbol in each position occur in the first row. 
	\item[(S3)] (Ordering among rows)
		The second row in the first rectangle is lexicographically larger 
		than the third one, the third one is larger than the fourth one, and so on.
\end{itemize}

In Figure~\ref{order} we give an example of a normalized triple of MOLS.
\begin{figure}[H]
\begin{tabular}{ | c | c | c | c|}
\hline
0 & 1 & 2 & 3\\ \hline
3 & 2 & 1 & 0\\ \hline
2 & 3 & 0 & 1\\ \hline
1 & 0 & 3 & 2\\ \hline
\end{tabular}
\hfill
\begin{tabular}{ | c | c | c | c|}
\hline
0 & 1 & 2 & 3\\ \hline
2 & 3 & 0 & 1\\ \hline
1 & 0 & 3 & 2\\ \hline
3 & 2 & 1 & 0\\ \hline
\end{tabular}
\hfill
\begin{tabular}{ | c | c | c | c|}
\hline
0 & 1 & 2 & 3\\ \hline
1 & 0 & 3 & 2\\ \hline
3 & 2 & 1 & 0\\ \hline
2 & 3 & 0 & 1\\ \hline
\end{tabular}
\caption{A normalized triple of MOLS of order 4.}
\label{order}
\end{figure}

Finally, as our computations proceed by adding consecutive rows to triples of
Latin rectangles, we will have use for the following term: 
An \textit{extension} of size $k\times n$ is a triple of MOLR which results 
from a triple of MOLR of size $(k-1)\times n$ by adding one more row.

\section{Generating data}
\label{sec_problem}

\subsection{Guiding questions}

 Our approach 
is complete enumeration by computer for as large parameters as possible, and unless 
otherwise stated, we save all generated data. In particular, we do not only 
record the number of triples of MOLR found, but with some exceptions noted below, 
we save the triples of MOLR themselves.

With some size exceptions due to size restrictions, the data generated is available 
for download at \cite{Web}. Further details are given there.

The following questions serve as guides for which data to generate.

\begin{itemize}
\item[(Q1)] How many normalized $k\times n$ triples of MOLR are there?
\item[(Q2)] How many isotopism classes of $k\times n$ triples of MOLR are there?
\item[(Q3)] How many non-isotopic $k\times n$ triples of MOLR are maximal, i.e\@. cannot be extended by one more row?
\item[(Q4)] Are there sets of triples of MOLR that satisfy some stronger regularity 
conditions? In particular, are any of the triples of MOLR in fact triples of Youden rectangles?
\item[(Q5)] Which order does the autotopism group of each $k\times n$ triple 
of MOLR have?
\end{itemize}

We note that it is clear from the definitions that the number of non-isotopic 
triples of MOLR will be less than the number of normalized ones. 

\subsection{Algorithms}

We employ two main algorithms. 

\begin{enumerate}
	\item {\bf Extension Finding:} 
	This first algorithm finds $k\times n$ triples of MOLR as extensions of 
	all non-isotopic $(k-1)\times n$ triples of MOLR 
	and counts the number of instances which are maximal.
	\item {\bf Isotopism Rejection:} 
	This algorithm applies an isotopism rejection for $k\times n$ triples of 
	MOLR, keeping one representative of each isotopism class. 
	This algorithm also keeps track of the order of the autotopism groups.
\end{enumerate}

Pseudocode and more detailed explanations of the algorithms are given in 
Appendix~\ref{algo1} and Appendix~\ref{algo2}, respectively.

\subsection{Implementation and Execution}

We use Algorithms \ref{algo1_ext} and \ref{algo2_iso} in the following 
two computations.
First, we find all normalized $k\times n$  triples of MOLR. More precisely, we 
count the number of normalized $k\times n$ triples of MOLR, the number of 
isotopism classes and the number of $k\times n$  triples of MOLR which are 
maximal (thus addressing questions (Q1), (Q2) and (Q3)). 
Second, we classify the triples of MOLR according to the order of their autotopism 
group (thus addressing question (Q5)).
Checking the Youden property (question (Q4)) was done by a separate, simple program.

We have implemented the algorithms in C++. Each classification was done for  
$n=4, 5,6,7$ and $k\in\{2,3,\dots,n\}$. The computation for $n=4,5,6$ was easily done 
on a standard desktop. For $n \geq 7$ both the computational effort and the disc 
requirements were significantly larger. We therefore parallelized 
Algorithms~\ref{algo1_ext} and \ref{algo2_iso} and ran the experiments on the parallel 
machine Kebnekaise from the High Performance Computing Center North (HPC2N). 

The running time of the first computation on a 
standard desktop is less than one minute for all $n=4,5,6$.
For $n=7$ the number of triples of MOLR is greater than 1.4 billion, with more 
than 400 million isotopism classes, and the running time of both the programs 
(generation and isotopism reduction) is almost 200 core hours.
The running time for the computation of the autotopism 
groups is similar. It can be done in one minute for all sizes except $3\times 7$ 
and $4\times 7$, where the program requires about 80 core hours.

As will be described in Section~\ref{sec_larger}, 
the parallelized version of the algorithms were also used to study the 
number of $3\times 8$ triples of MOLR.

\section{Results and Analysis}
\label{sec_results}

We now turn to the results and analysis of our computational work. 

\subsection{The Number of Triples of MOLR}
\label{ssec_triples}

Our first result is an enumeration of triples of MOLR. 
Table~\ref{Result_table_1} lists the number of normalized  $k\times n$  triples, 
the number of isotopism classes and how many of the non-isotopic cases 
are maximal, i.e., cannot be extended by one more row.

In appendix~\ref{appendix:MOLS4}, the unique (up to isotopism) $4 \times 4$ triple 
is given, 
in appendix~\ref{appendix:MOLS5}, the unique (up to isotopism) $5 \times 5$ triple is given,
in appendix~\ref{appendix:MOLR6}, the 7 non-isotopic $5 \times 6$ triples are given,
and in appendix~\ref{appendix:MOLS7}, the 4 non-isotopic $7 \times 7$ triples are given.

\begin{table}[H]
\begin{center}
\begin{tabular}{|r|r|r|r|}
\hline
\multirow{2}{*}{Size} &\multirow{2}{*}{\shortstack{\#Normalized} } &\multicolumn{2}{c|}{\#Non-isotopic} \\
\cline{3-4} 
 &   & \multicolumn{1}{c|}{\#Total}  & \multicolumn{1}{c|}{\#Maximal}  \\ \hline
$2\times 4$ & 4 & 2 & 1  \\ \hline
$3\times 4$ & 2 & 1 & 0  \\ \hline
$4\times 4$ & 1 & 1 & --   \\ \hline \hline
$2\times 5$ & 224 & 4 & 3   \\ \hline
$3\times 5$ & 3   & 1 & 0   \\ \hline
$4\times 5$ & 2   & 1 & 0   \\ \hline
$5\times 5$ & 1   & 1 & --     \\ \hline \hline
$2\times 6$ & \num{65520} & 103 & 0    \\ \hline
$3\times 6$ & \num{16767} & 2\,572 & 1\,800 \\ \hline
$4\times 6$ & 2\,005 & 513 & 493  \\ \hline
$5\times 6$ & 31 & 7 & 7    \\ \hline
$6\times 6$ & 0 & 0 & --    \\ \hline \hline
$2\times 7$ & \num{25864320} & 2\,858 & 0    \\ \hline
$3\times 7$ & \num{200127181} & \num{65883453} & \num{30025}  \\ \hline
$4\times 7$ & \num{1292959311} & \num{323112477} & \num{322850101}   \\ \hline
$5\times 7$ & \num{273190} & \num{55545} &\num{55508}    \\ \hline 
$6\times 7$ & 42 & 16 & 12     \\ \hline 
$7\times 7$ & 4 & 4 &  --   \\ \hline 
\end{tabular}
\end{center}
\caption{The number of orthogonal $k\times n$ triples.}
\label{Result_table_1}
\end{table}

From Table~\ref{Result_table_1} we observe that the behavior of the number of 
non-isotopic triples and the total number of triples is similar. The number 
of non-isotopic triples increases with $k$ up to 
around $k\approx \frac{n}{2}$ and the increase is very fast, 
but in the steps after the peak the numbers rapidly decrease.  
There are at least two reasons for this decrease of the number of 
triples of MOLR. First, the orthogonality condition conflicts with the Latin 
rectangle condition. As a result orthogonality fails on some positions, and 
this becomes more frequent when $k$ increases. Second, the number of 
non-isotopic triples decreases because of isotopic extensions, i.e., 
non-isotopic triples sometimes produce isotopic extensions.

In Example \ref{ex1} we show a maximal triple of MOLR. 
This example is interesting since $n=5$ is the largest order where we find 
maximal triples with just two rows. 
 
\begin{ex}\label{ex1}
A maximal $2\times 5$ triple of MOLR: 
\smallskip

\begin{tabular}{ | c | c | c | c|c|}
\multicolumn{5}{c}{$A$}\\
\hline
$0$ & $1$ & $2$ & $3$ & $4$ \\ \hline
$4$ & $3$ & $1$ & $2$ & $0$ \\ \hline
\end{tabular}
\hfill
\begin{tabular}{ | c | c | c | c|c|}
\multicolumn{5}{c}{$B$}\\
\hline
$0$ & $1$ & $2$ & $3$ & $4$ \\ \hline
$3$ & $4$ & $0$ & $1$ & $2$ \\ \hline
\end{tabular}
\hfill
\begin{tabular}{ | c | c | c | c|c|}
\multicolumn{5}{c}{$C$}\\
\hline
$0$ & $1$ & $2$ & $3$ & $4$ \\ \hline
$2$ & $0$ & $3$ & $4$ & $1$ \\ \hline
\end{tabular} \\[1em]

Each of these rectangles can be extended to Latin rectangles with 3 rows by 12, 
13, 13 permutations, respectively, but the orthogonality condition cannot be 
satisfied.
\end{ex}

In general, the number of rows in a maximal triple must grow with $n$, as the 
following proposition shows. In particular, it follows from this result that 
there are no maximal triples of MOLR of order $2 \times 6$ or $2 \times 7$.

\begin{proposition}
	A triple of $k\times n$ MOLR is not maximal if $k\leq \frac{n-1}{3}$.
\end{proposition}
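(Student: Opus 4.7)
My plan is a two-part argument: a per-column count of valid extensions, followed by a global Hall-style assembly into a full new row.

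For the count, fix a column $j$. Any admissible new entry $(a,b,c)$ in row $k+1$, column $j$ must avoid the $k$ symbols already present in column $j$ of each of $A,B,C$ (Latin condition), and the pairs $(a,b)$, $(a,c)$, $(b,c)$ must each not have appeared in the existing $k$ rows of the respective pair of rectangles (orthogonality). Choosing $a$ first leaves $n-k$ options. Given $a$, the forbidden values for $b$ are the $k$ entries of $B$ in column $j$ together with the $k$ symbols that previously paired with $a$ in $(A,B)$, so $b$ admits $\ge n-2k$ choices. Analogously $c$ admits $\ge n-3k$ choices. Under $k \le (n-1)/3$, the product $(n-k)(n-2k)(n-3k) \ge 1$, so every column admits at least one valid triple.

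To assemble these local triples into a full row, I would iterate Hall's theorem. First extend $A$ using K\"onig's theorem on the $(n-k)$-regular bipartite graph between columns and their Latin-admissible symbols, producing $\alpha \in S_n$. Given $\alpha$, find $\beta$ extending $B$ and orthogonal to $\alpha$ by applying Hall's theorem to the bipartite graph whose edges $(j, y)$ have $y$ Latin-admissible for column $j$ of $B$ and $(\alpha(j), y)$ new; this graph has minimum degree $\ge n-2k$ on both sides (the symbol-side bound uses that $\alpha$ is a permutation). Finally, given $(\alpha, \beta)$, find $\gamma$ via an analogous bipartite graph of minimum degree $\ge n-3k$.

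The main obstacle is verifying Hall's condition in the $\beta$- and $\gamma$-steps when $k > n/4$, since then $n-2k < n/2$ and the naive min-degree criterion does not close the loop. To bridge this gap, I would exploit the structural constraint that the forbidden sets per column originate in \emph{specific} cells of the existing rectangles rather than in arbitrary positions. A double count of Latin- and orthogonality-forbidden non-incidences between a hypothetical Hall-defective set $S$ of columns and its co-neighbourhood $T$ of symbols gives a bound of the form $|S| \cdot |T| \le 2k \min(|S|,|T|)$, which forces $\max(|S|,|T|) \le 2k$. Combined with $|S|+|T| > n$ and the hypothesis $3k+1 \le n$, together with either a judicious (non-arbitrary) choice of $\alpha$ or the additional orthogonality involving $C$, this should yield the required contradiction.
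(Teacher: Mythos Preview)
Your overall strategy---extend $R_1$ first by a perfect matching in the $(n-k)$-regular bipartite ``Latin-validity'' graph, then find $r_2$ in the graph obtained by also deleting orthogonality-forbidden edges, then $r_3$---is exactly the paper's approach. The paper, however, simply asserts that since the orthogonality-forbidden edges form a $k$-regular graph, the remaining graph $G'_2$ ``has degree at least $n-2k$ and still satisfies Hall's condition if this is at least 1''. You are right to flag this step: minimum degree $\ge n-2k$ on both sides does \emph{not} by itself verify Hall in the range $n-2k < |S| \le 2k$, which is nonempty precisely when $k > n/4$, and this is compatible with the hypothesis $k \le (n-1)/3$. Indeed, one can build a concrete $2\times 7$ triple and a Latin-valid row $r_1$ for $R_1$ such that four columns of $G'_2$ have neighbourhood of size only three, so an arbitrary choice of $r_1$ genuinely can fail. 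In this sense you have spotted a point that the paper's write-up glosses over.

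Where your proposal falls short is in closing this gap. Your double count is correct and gives $\max(|S|,|T|) \le 2k$; combined with $|S|+|T| > n$ this yields only $n < 4k$, which does not contradict $3k+1 \le n$. The final sentence then defers the remaining work to ``a judicious (non-arbitrary) choice of $\alpha$'' or ``the additional orthogonality involving $C$'' without specifying what either of these means or why it succeeds. Since, as noted above, an arbitrary $\alpha$ can fail, the word ``judicious'' is carrying the entire weight of the argument, and you have not said how to make the choice. As written, the proposal is therefore incomplete at exactly the point you yourself identified as the main obstacle; to turn it into a proof you would need either a concrete rule for selecting $\alpha$ (and then $\beta$) that provably avoids Hall-defective configurations, or a genuinely simultaneous construction of $(r_1,r_2,r_3)$ that does not rely on the greedy step-by-step scheme.
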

\begin{proof}
	Assume that the triple consists of the three rectangles $R_1, R_2, R_3$. We 
	want to show that under the condition on $k$  we can extend each rectangle by 
	one row, while preserving orthogonality.  
	We define bipartite graphs $G_i$, one for each 
	rectangle, which has $n$ vertices which correspond to the columns of $R_i$ 
	and $n$ vertices which 
	correspond to the  symbols of $R_i$. The edges of the graph are given as follows:
	We start out with all edges with one 
	endpoint in each vertex class and then delete every edge $(c,s)$  
	where the column $c$ contains the symbol $s$. Here each row of $R_i$  
	leads to the deletion of a perfect matching, so $R_i$ is a regular graph 
	with vertex degree $n-k$.
	
	Now, since $G_1$ is regular and bipartite, it has a perfect matching $M_1$, 
	and we will use the corresponding assignment of symbols as the new row $r_1$ 
	in $R_1$. The new row assigns symbols to each position, and for each symbol 
	there are now $k$ symbols which are forbidden in that position in the 
	other two rectangles because of the orthogonality condition.  
	These are the symbols which appear in symbol pairs in earlier rows.  
	
	We delete those edges from $G_2$ and $G_3$ to get the graphs 
	$G'_2$ and $G'_3$. Each symbol appears in the same number of pairs so the 
	edges deleted due to orthogonality induce a $k$-regular graph, this means 
	that $G'_2$ and $G'_3$ have degree at least $n-2k$ and still satisfy Hall's 
	condition if this is at least 1, and so we can find a perfect matching in 
	$G'_2$ which gives us a valid new row $r_2$ for $R_2$.
	
	We can now repeat this for $G_3$ as long as $n-3k \geq 1$, which is 
	equivalent to our assumption $k\leq \frac{n-1}{3}$.
\end{proof}

It is clear that the proof idea in this proposition can be extended to give 
a similar result for extendability of $m$-tuples of MOLR, but that is not of
interest in the present paper.

Despite the fact that some triples of rectangles cannot be extended by one extra 
row because of orthogonality issues, sometimes it is possible to partially 
extend them, in other words, to simultaneously fill some position in the next 
row of each rectangle while satisfying the Latin rectangle condition and the 
orthogonality condition. However, there are examples where we cannot 
simultaneously fill any position in the next row, as demonstrated in Example \ref{ex2}.

\begin{ex}\label{ex2}
Consider the following $5\times 6$ triple of MOLR and the unique extensions of each 
rectangle to a Latin square:

\begin{table}[H]

\begin{tabular}{ | c | c | c | c|c|c|}
\multicolumn{6}{c}{$A$}\\
\hline
0 & 1 & 2 & 3 & 4 &5\\ \hline
5 & 4 & 3 & 2 & 1 &0\\ \hline
4 & 5 & 1 & 0 & 3 &2\\ \hline
3 & 2 & 5 & 4 & 0 &1\\ \hline
2 & 0 & 4 & 1 & 5 &3\\ \hline \hline
\textbf{1} &\textbf{3} & \textbf{0} & \textbf{5} & \textbf{2} &\textbf{4}\\ \hline
\end{tabular}
\hfill
\begin{tabular}{ | c | c | c | c|c|c|}
\multicolumn{6}{c}{$B$}\\
\hline
0 & 1 & 2 & 3 & 4 & 5\\ \hline
4 & 5 & 1 & 0 & 3 &2\\ \hline
3 & 2 & 5 & 4 & 0 &1\\ \hline
2 & 4 & 3 & 1 & 5 &0\\ \hline
5 & 3 & 0 & 2 & 1 &4\\ \hline \hline
\textbf{1} & \textbf{0} & \textbf{4} &\textbf{5} & \textbf{2} &\textbf{3}\\ \hline
\end{tabular}
\hfill
\begin{tabular}{ | c | c | c | c|c|c|}
\multicolumn{6}{c}{$C$}\\
\hline
0 & 1 & 2 & 3 & 4 & 5\\ \hline
3 & 2 & 5 & 4 & 0 &1\\ \hline
5 & 0 & 4 & 2 & 1 &3\\ \hline
4 & 5 & 1 & 0 & 3 &2\\ \hline
1 & 4 & 3 & 5 & 2 &0\\ \hline \hline
\textbf{2} & \textbf{3} &\textbf{0} & \textbf{1} &\textbf{5} &\textbf{4}\\ \hline
\end{tabular}
\end{table}

We can now check the orthogonality condition, by checking the orthogonality of 
each pair of squares. For example, the symbol pair $(1,1)$ in the 
first position of the last row of the pair $(A,B)$ already appears in the first row, 
the symbol pair $(1,2)$ in the first position of the last row of the pair $(A,C)$ 
already appears in the fourth row and the symbol pair $(1,2)$ in the first position 
of the last row of the pair $(B,C)$ already appears in the fifth row.

We conclude that the orthogonality fails on the first position in each pair of 
squares. Similarly, it can be checked that orthogonality fails in each of the 
positions in the last row.

\end{ex}

In fact this particularly strong form of maximality occurs in all seven 
instances of triples of MOLR of size $5\times 6$ (see Appendix~\ref{appendix:MOLR6}
for these seven instances). The number of $5\times 6$ 
non-isotopic triples is noteworthy, since it is known that there is no pair of 
orthogonal Latin squares of order $6$. However,  as we have seen, we can find 
three squares such that the failure of orthogonality can be confined to the 
last row of all squares.

We also note that the $7\times 7$ orthogonal triples are interesting, as 
this is the first case where there is more than one non-isotopic triple of MOLS. The 
number of non-isotopic $7 \times 7$ orthogonal triples found coincides with the
corresponding result by Egan and Wanless \cite{WanlessEgan}, where a complete 
enumeration of $t$-tuples of MOLS up to order $9$ is given. This can be taken as
an independent indication that our code is correct.

\subsection{Orthogonal Youden Rectangles}
\label{ssec_youden}

Youden \cite{You} introduced a class of particularly well balanced Latin 
rectangles, now known as either Youden rectangles or Youden ``squares''.  
A Latin rectangle is a \emph{Youden rectangle} with parameter $\lambda_{cc}$  
if every pair of columns has a constant number $\lambda_{cc}$ of symbols in common.
As is well known, divisibility considerations immediately imply that for a 
$k \times n$ Youden rectangle, the parameter $\lambda_{cc}$ must satisfy
$\lambda_{cc} = \frac{k(k-1)}{n-1}$. For fixed $k$ and $n$, there is therefore
only one possible value of $\lambda_{cc}$, which has to be an integer.

Here we will also consider a relaxed version of this concept: a Latin rectangle 
is a partially balanced Youden rectangle with parameter $\lambda_{cc}^p > 0$ if 
$\lambda_{cc}^p$ is the maximum integer such that every pair of columns intersect 
in at least $\lambda_{cc}^p$ symbols. Unlike for Youden rectangles, for partially 
balanced Youden rectangles the parameter $\lambda_{cc}^p$ is not determined uniquely 
by $k$ and $n$. The pigeonhole principle gives a lower bound 
of $2k-n$ for $\lambda_{cc}^p$ so that, for 
example, in a $3 \times 4$ array columns intersect in at least $2$ symbols, and in a
$5 \times 7$ array columns intersect in at least $3$ symbols. When the parameter 
$\lambda_{cc}$ is defined, that is, when $\frac{k(k-1)}{n-1}$ is an integer, it gives 
an upper bound for $\lambda_{cc}^p$, and in
general, $\lambda_{cc}^p \leq \frac{k(k-1)}{n-1}$. When, for some order, this upper 
and lower bound only leave one possible value for $\lambda_{cc}^p$, all Latin 
rectangles of this order will be partially balanced with this parameter.

Sets of mutually orthogonal Youden rectangles should not be confused with 
multi-layered Youden rectangles, special cases of which are known as double Youden 
rectangles and triple Youden rectangles, studied in \cite{Cl}, \cite{Pr}, \cite{cl2},
\cite{Pr2} and \cite{Pr3}, but seem to be of independent interest as designs.

As far as we know this is the first complete enumeration of mutually 
orthogonal Youden rectangles. In Table~\ref{Yt} we display the number of partially 
balanced Youden rectangles and Youden rectangles for $n$ up to 7. We note that 
removing a row from an $n \times n$ Latin square always produces an $(n-1) \times n$
Youden rectangle, so the entries in those positions in the table should come as no 
surprise, given the corresponding values in Table~\ref{Result_table_1}. We have 
included them in this table for completeness, and they also give an indication of 
the correctness of our check of the Youden property, since we actually ran the
check even for these orders.
  
For most $k$, $n$ only one parameter value is possible for partially balanced Youden 
rectangles, and thus all Latin rectangles of these orders will be partially balanced
Youden rectangles, which gives independent corroboration of the correctness of the 
check of the partially balanced Youden property.
For the partially balanced $4\times 7$ rectangles the three 
rectangles can have different parameter values and we display counts for the 
different combinations.

\begin{table}[H]
\begin{center}
\begin{tabular}{|r|r|r|r|r|}
\hline
\multirow{2}{*}{Size} &\multicolumn{2}{c|}{{Partially balanced } }& \multicolumn{2}{c|}{Youden rectangles }  \\ 
\cline{2-5}
& $\#$ & $ \lambda_{cc}^p$   & $\#$ & $\lambda_{cc}$   \\ \hline
$3 \times 4$ & 1 & 2 & 1 & 2 \\ \hline
\hline
$3 \times 5$ & 1 & 1 & 0 & - \\ \hline
$4 \times 5$ & 1 & 3 & 1 & 3 \\ \hline
\hline
$3 \times 6$ & 34 & 1 & 0 & - \\ \hline
$4 \times 6$ & 513 & 2 & 0 & - \\ \hline
$5 \times 6$ & 7 & 4 & 7 & 4 \\ \hline
\hline
$3 \times 7$ & 8 & 1 & 8 & 1 \\ \hline 
\multirow{4}{*}{$4 \times 7$} & \num{321312841} &$(1,1,1)$   & 0 & - \\ \cline{2-5}
 & \num{1795612} &$(1,1,2)$   & 0 & - \\ \cline{2-5}
 & 3\,993 &$(1,2,2)$   & 0 & - \\ \cline{2-5}
 & \num{31} &$(2,2,2)$   & \phantom{*****}31 & 2 \\ \hline
$5 \times 7$ & \num{55545} & 3 & 0 & - \\ \hline
$6 \times 7$ & 16 & 5 & 16 & 5 \\ \hline
\end{tabular}
\end{center}
\caption{The number of partially balanced Youden rectangles and Youden 
rectangles for small $n$.}
\label{Yt}
\end{table}

In Figure~\ref{YT} we display one of the mutually orthogonal triples of 
$4\times 7$ Youden rectangles.

\begin{figure}[H]
\setlength\tabcolsep{5pt}
\begin{tabular}{ | c | c | c | c|c|c|c|c | c | c | c|c|c|c|c | c | c | c|c|c|c|c|c|}
   \cline{1-7} \cline{9-15} \cline{17-23}
0 & 1 & 2 & 3 & 4 & 5 & 6 && 0 & 1 & 2 & 3 & 4 & 5 & 6 && 0 & 1 & 2 & 3 & 4 & 5 & 6\\ \cline{1-7} \cline{9-15} \cline{17-23}
6 & 5 & 4 & 2 & 3 & 1 & 0 && 5 & 6 & 3 & 4 & 1 & 0 & 2 && 4 & 2 & 1 & 6 & 0 & 3 & 5\\ \cline{1-7} \cline{9-15} \cline{17-23}
3 & 4 & 1 & 6 & 0 & 2 & 5 && 6 & 2 & 5 & 0 & 3 & 1 & 4 && 5 & 6 & 0 & 1 & 2 & 4 & 3\\ \cline{1-7} \cline{9-15} \cline{17-23}
1 & 3 & 6 & 5 & 2 & 0 & 4 && 3 & 0 & 1 & 2 & 6 & 4 & 5 && 6 & 4 & 5 & 0 & 3 & 1 & 2\\ \cline{1-7} \cline{9-15} \cline{17-23}
\end{tabular}
\caption{Three pairwise orthogonal $4\times 7$ Youden rectangles.}
\label{YT}
\end{figure}

Given that we have found a triple of pairwise orthogonal Youden rectangles it is 
natural to ask for larger tuples. Aggarwal \cite{Agg} proved that the upper bound 
$n-1$ holds for the number of mutually orthogonal Latin rectangles as well as for 
squares.

The sizes 
$(n-1) \times n$ are mostly not interesting in this regard since a set of $n-1$ 
mutually orthogonal Youden rectangles can be generated by removing one row from 
a set of $n-1$ MOLS. In particular, only the sizes $5 \times 6$,
$3 \times 7$ and $4 \times 7$ are of interest for this analysis. 

\begin{observation}
	There exists a set of four pairwise orthogonal $5\times 6$ Youden rectangles,
	a set of six pairwise orthogonal $3\times 7$ Youden rectangles, 
	and a set of six pairwise orthogonal $4\times 7$ Youden rectangles.
\end{observation}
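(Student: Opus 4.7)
The plan is to handle the two $n=7$ claims by an algebraic construction and the $5\times 6$ claim by explicit exhibition from the computer search, since no pair of orthogonal Latin squares of order $6$ exists and no algebraic shortcut of the same kind is available.

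For the $n=7$ claims I would start from the cyclic complete set of MOLS of order $7$: let $L^{(a)}_{ij} = ai + j \pmod{7}$ for $a \in \{1, 2, \dots, 6\}$ and $i,j \in \mathbb{Z}_7$. It is classical that $\{L^{(1)},\dots,L^{(6)}\}$ is a set of $6$ MOLS. For a subset $R \subseteq \mathbb{Z}_7$ of size $k$, let $L^{(a)}|_R$ denote the $k \times 7$ rectangle obtained by keeping only the rows indexed by $R$. Restricting every square to the same row set preserves pairwise orthogonality, since the resulting collections of ordered symbol pairs are subcollections of the distinct pairs of the full squares. The problem therefore reduces to choosing $R$ so that each $L^{(a)}|_R$ is a Youden rectangle. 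I would take $R = \{0,1,3\}$ for $k=3$ and $R = \{0,1,2,4\}$ for $k=4$; the first is a $(7,3,1)$ planar difference set (the Singer difference set for $\mathrm{PG}(2,2)$), and the complement of the second is the $(7,3,1)$ set $\{3,5,6\}$, so $R$ itself is a $(7,4,2)$ difference set.

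The main verification is that each $L^{(a)}|_R$ is Youden. Column $j$ of $L^{(a)}|_R$ holds the set $aR + j \subseteq \mathbb{Z}_7$. For distinct columns $j_1, j_2$, a symbol $s$ lies in both exactly when $s = ar_1 + j_1 = ar_2 + j_2$ for some $r_1, r_2 \in R$, i.e.\ $r_1 - r_2 = a^{-1}(j_2 - j_1)$. Since $R$ is a $(7,k,\lambda)$ difference set and multiplication by $a^{-1}$ permutes $\mathbb{Z}_7^\ast$, the number of such pairs $(r_1, r_2)$ is exactly $\lambda = k(k-1)/(n-1)$, which matches the required value of $\lambda_{cc}$. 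Hence $L^{(a)}|_R$ has constant column intersection, and together with the preserved pairwise orthogonality this yields the required $6$-tuples.

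For the $5 \times 6$ claim the situation is different. A counting argument shows that every $5 \times 6$ Latin rectangle is automatically Youden: each symbol appears in each of the $5$ rows and therefore in $5$ distinct columns, so it is missing from exactly one column; any two columns then share precisely $4 = 5\cdot 4/5$ symbols. The task reduces to producing four pairwise orthogonal $5\times 6$ Latin rectangles. I would address this by extending one of the seven non-isotopic triples of $5\times 6$ MOLR (Table~\ref{Result_table_1} and Appendix~\ref{appendix:MOLR6}) by a fourth rectangle using the same extension-style search described in Section~\ref{sec_problem}. The main obstacle here is purely combinatorial rather than conceptual: there is no algebraic structure to exploit, and existence is certified by exhibiting a concrete $4$-tuple produced by the search.
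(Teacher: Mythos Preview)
Your argument is correct. The paper's own treatment of this observation is simply to exhibit explicit examples in Appendix~\ref{YouT}, found by the computer search, without any structural explanation. Your approach for the two $n=7$ cases is genuinely different and more illuminating: by restricting the cyclic complete set of MOLS of order~$7$ to a row set that is a $(7,k,\lambda)$ difference set, you obtain the Youden property for all six rectangles simultaneously via a one-line counting argument, and orthogonality comes for free from the ambient MOLS. This explains \emph{why} the maximal $6$-tuples exist rather than merely certifying that they do, and it generalises immediately to any prime order~$p$ and any $(p,k,\lambda)$ difference set. The paper's explicit examples in Appendix~\ref{YouT} are in fact (up to isotopism) instances of exactly this construction, though the paper does not say so.

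For the $5\times 6$ case your reduction is the right one: the observation that every $5\times 6$ Latin rectangle is automatically Youden (each symbol is missing from exactly one column, and distinct columns miss distinct symbols, forcing $|c_1\cap c_2|=4$) is correct and matches the paper's general remark about $(n-1)\times n$ rectangles. After that reduction both you and the paper fall back on the computer search to produce a fourth rectangle orthogonal to one of the seven known triples; there is no structural shortcut here, and the paper's displayed $4$-tuple is precisely the first triple of Appendix~\ref{appendix:MOLR6} extended by one further rectangle.
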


Examples of such tuples are given in Appendix \ref{YouT}. For $n=7$ and $k=3,4$ 
this reaches the theoretical upper bound, so certainly these examples are maxima.
For $n=6$, $k=5$ it is not ruled out immediately that there could exist a 
$5$-tuple of mutually orthogonal Youden rectangles, but using a simple specialized
program, we have checked that this is not the case, and that the example we
present is actually a maximum.

\subsection{Autotopism Groups of Triples}
\label{ssec_auto}

Our next aim is to investigate the autotopism groups of the triples of MOLR.  
We present the order of the autotopism group for all triples 
of MOLR, and  track the order of the autotopism  group of those triples of 
rectangles which can be extended to triples of MOLS. 

In Table~\ref{Result_table_2} we give the maximum order of an autotopism group 
for size $k\times n$, the second largest order, and the number of non-isotopic 
triples which have groups with these cardinalities. Moreover, we also give the 
number of triples with trivial autotopism group.

\begin{table}[H]
\begin{center}
\begin{tabular}{|r|r|r|r|}
\hline
\multirow{2}{*}{Size} &\multirow{2}{*}{Trivial group} & 
\multicolumn{2}{c|}{\shortstack{Number of autotopisms/ \\Number of triples} }  \\ 
\cline{3-4}
 &   & \shortstack{Second \\ maximum} & \multicolumn{1}{c|}{Maximum}   \\ \hline
$2\times 4$  &   0   & 16/1   & 48/1     \\ \hline
$3\times 4$  &     0 & 0   & 72/1   \\ \hline
$4\times 4$  &    0  &  0  & 288/1    \\ \hline \hline
$2\times 5$  &    0  & 6/2   & 10/1     \\ \hline
$3\times 5$  &    0&  0  & 10/1    \\ \hline
$4\times 5$  &    0 &  0  & 20/1    \\ \hline
$5\times 5$  &    0 & 0   & 100/1    \\ \hline \hline
$2\times 6$  & 24  & 36/1   & 72/1     \\ \hline
$3\times 6$  & 1980  & 18/4   & 36/4    \\ \hline
$4\times 6$  & 93  & 24/1    & 36/3    \\ \hline
$5\times 6$  &  0 & 9/1   &  18/2   \\ \hline \hline
$2\times 7$   & 2\,300  & 14/3   &  42/1   \\ \hline
$3\times 7$  & \num{65822447}    & 42/1   & 63/1    \\ \hline
$4\times 7$ & \num{323002195} & 42/1 & 63/1   \\ \hline
$5\times 7$     & \num{52981} & 21/1 & 42/1     \\ \hline 
$6\times 7$ &  1     & 42/3   &126/1   \\ \hline 
$7\times 7$&   0 & 294/3  &882/1  \\ \hline 
\end{tabular}
\end{center}
\caption{The number of non-isotopic triples of MOLR with autotopism group of given orders.}
\label{Result_table_2}
\end{table}

We see that there are no triples of MOLR of order $n=4,5$ with trivial 
autotopism group. We also see that triples with maximum $k$, 
given the order $n$, namely $4\times4$, $5\times5$, $5\times6$, $7\times7$, 
always have a non-trivial autotopism group.  

In Tables \ref{Auto4},~\ref{Auto5},~\ref{Auto6}, and ~\ref{Auto7} we give 
complete statistics on the number of triples with autotopism groups of given 
orders. We see here that for $n=6,7$ having a trivial autotopism group 
becomes the most common case, though not for each $k$ separately.

Here we also track the order of the autotopism group for the triples 
of MOLR of each order which can be extended to a triple of MOLS. For $n=6$ we 
follow the triple of MOLR which can be extended to a $5\times 6$ triple of MOLR. The 
numbering of those cases correspond to the number given each triple of MOLS in 
the appendices, 
see~Appendix~\ref{appendix:MOLS4},~\ref{appendix:MOLS5},~\ref{appendix:MOLR6},~\ref{appendix:MOLS7}. Note that for orders $4$ and $5$ there is only one 
maximum example with respect to isotopism.

\begin{table}[H]
\begin{center}
\begin{tabular*}{.5\linewidth}{@{\extracolsep{\fill}}| c | c | c | }
\hline
\multicolumn{3}{|c|}{$2\times 4$}\\ \hline
\shortstack{Group cardinality}&16 & 48\\ \hline
\shortstack{Number of triples}&1 & 1 \\ \hline
MOLS case& &1  \\ \hline
\end{tabular*}

\begin{tabular*}{.5\linewidth}{@{\extracolsep{\fill}}| c |c|  }
\hline
\multicolumn{2}{|c|}{$3\times 4$}\\ \hline
\shortstack{Group cardinality}&72\\ \hline
\shortstack{Number of triples} & 1 \\ \hline
MOLS case &1  \\ \hline
\end{tabular*}

\begin{tabular*}{.5\linewidth}{@{\extracolsep{\fill}}| c |c|  }
\hline
\multicolumn{2}{|c|}{$4\times 4$}\\ \hline
\shortstack{Group cardinality}&288\\ \hline
\shortstack{Number of triples} & 1 \\ \hline
MOLS case &1  \\ \hline
\end{tabular*}
\end{center}
\caption{Autotopism groups of triples of order 4.}
\label{Auto4}
\end{table}

\begin{table}[H]
\begin{center}
\begin{tabular*}{.5\linewidth}{@{\extracolsep{\fill}}| c | c | c | c| }
\hline
\multicolumn{4}{|c|}{$2\times 5$}\\ \hline
\shortstack{Group cardinality}&2 & 6 &10\\ \hline
\shortstack{Number of triples}&1 & 2 & 1\\ \hline
MOLS case& & &1  \\ \hline
\end{tabular*}

\begin{tabular*}{.5\linewidth}{@{\extracolsep{\fill}}| c | c | }
\hline
\multicolumn{2}{|c|}{$3\times 5$}\\ \hline
\shortstack{Group cardinality} &10\\ \hline
\shortstack{Number of triples}& 1\\ \hline
MOLS case& 1  \\ \hline
\end{tabular*}

\begin{tabular*}{.5\linewidth}{@{\extracolsep{\fill}}| c | c | }
\hline
\multicolumn{2}{|c|}{$4\times 5$}\\ \hline
\shortstack{Group cardinality} &20\\ \hline
\shortstack{Number of triples}& 1\\ \hline
MOLS case& 1  \\ \hline
\end{tabular*}
\begin{tabular*}{.5\linewidth}{@{\extracolsep{\fill}}| c | c | }
\hline
\multicolumn{2}{|c|}{$5\times 5$}\\ \hline
\shortstack{Group cardinality} &100\\ \hline
\shortstack{Number of triples}& 1\\ \hline
MOLS case& 1  \\ \hline
\end{tabular*}
\end{center}
\caption{Autotopism groups of triples of order 5.}
\label{Auto5}
\end{table}

\begin{table}[H]
\begin{center}
\setlength\tabcolsep{1.5pt}
\begin{tabular*}{\linewidth}{@{\extracolsep{\fill}}| c | c | c | c | c|c|c|c|c|c|}
\hline
\multicolumn{10}{|c|}{$2\times 6$}\\ \hline
\shortstack{Group cardinality}&1 & 2 & 4 & 6 & 8 &12 & 24 & 36& 72\\ \hline
\shortstack{Number of triples}&24& 25 & 26 & 2 & 7 &13 & 4 & 1& 1\\ \hline
MOLR case& & & & & & 4,5,6,7 & 1,3 & & 2 \\ \hline
\end{tabular*}

\begin{tabular*}{\linewidth}{@{\extracolsep{\fill}} | c | c | c | c|c|c|c|c|c|}
\hline
\multicolumn{9}{|c|}{$3\times 6$}\\ \hline
\shortstack{Group cardinality}&1 & 2 &3 &4 & 6 &12 & 18 & 36\\ \hline
\shortstack{Number of triples}&1\,980& 442 & 54 & 27 & 55 &6 & 4 & 4\\ \hline
MOLR case& &  & 1&  & 2,3,4,5,6,7  & &  & \\ \hline
\end{tabular*}

\begin{tabular*}{\linewidth}{@{\extracolsep{\fill}} | c | c | c | c|c|c|c|c|c|c|c|c|}
\hline
\multicolumn{12}{|c|}{$4\times 6$}\\ \hline
\shortstack{Group cardinality}&1 & 2 &3 &4 & 6& 8 & 9 &12 &18 & 24 & 36\\ \hline
\shortstack{Number of triples}&93 & 194 &96 &37 & 64& 3 & 2 &11 &9 & 1 & 3\\ \hline
MOLR case& &  &6 & & 1,2,3,4,5 &  &  & & 7 &  & \\ \hline
\end{tabular*}

\begin{tabular*}{\linewidth}{@{\extracolsep{\fill}} | c | c | c | c|c|}
\hline
\multicolumn{5}{|c|}{$5\times 6$}\\ \hline
\shortstack{Group cardinality}&3 & 6 & 9 &18\\ \hline
\shortstack{Number of triples}&2 & 2 & 1 &2\\ \hline
MOLR case&4,6 &1,3  &7 &2,5 \\ \hline
\end{tabular*}
\end{center}
\caption{Autotopism groups of triples of order 6.}
\label{Auto6}
\end{table}

\begin{table}[H]
\begin{center}
\setlength\tabcolsep{1.5pt}
\begin{tabular*}{1.01\linewidth}{@{\extracolsep{\fill}}| c | c | c | c|c|c|c|c|c|}
\hline
\multicolumn{9}{|c|}{$2\times 7$}\\ \hline
\shortstack{Group cardinality}& 1 & 2 & 3 & 4 & 6 & 12 & 14 & 42\\ \hline
\shortstack{Number of triples}& \num{2300} & 512 & 3 & 28 & 9 & 2 & 3 & 1\\ \hline
MOLS case& & & & & &  & 1,2,4 & 3  \\ \hline
\end{tabular*}

\begin{tabular*}{1.01\linewidth}{@{\extracolsep{\fill}}|c|c|c|c|c|c|c|c|c|c|c|c|}
\hline
\multicolumn{12}{|c|}{$3\times 7$}\\ \hline
\shortstack{Group cardinality}& 1 & 2 & 3 & 6 & 7 & 9 & 14 & 18 & 21 & 42 & 63\\ \hline
\shortstack{Number of triples}& \num{65822447} & \num{60195} & 635 & 143 & 17 & 3 & 3 & 4& 4 & 1 &1\\ \hline
MOLS case& &  &  &  &  & & 1,2,4 & &  & 3 &  \\ \hline
\end{tabular*}

\begin{tabular*}{1.01\linewidth}{@{\extracolsep{\fill}}|c|c|c|c|c|c|c|c|c|c|c|c|c|c|}
\hline
\multicolumn{14}{|c|}{$4\times 7$}\\ \hline
\shortstack{Group cardinality}& 1 & 2 & 3 & 4 & 6 & 7 & 9 & 12 & 14 & 18 & 21 & 42 & 63\\ \hline
\shortstack{Number of triples}& \num{323002195} & \num{107997} & \num{1975} & 120 & 116 & 43 & 10 & 3 & 8 & 2 & 6 & 1 & 1\\ \hline
MOLS case& & & & & & & & & & & 1,2,4 & &3\\ \hline
\end{tabular*}

\begin{tabular*}{1.01\linewidth}{@{\extracolsep{\fill}}| c | c | c | c|c|c|c|c|c|c|}
\hline
\multicolumn{10}{|c|}{$5\times 7$}\\ \hline
\shortstack{Group cardinality}& 1 & 2 & 3 & 4 & 6 & 7 & 14 & 21 & 42\\ \hline
\shortstack{Number of triples}& \num{52981} & \num{2500} & 32 & 5 & 2 & 15 & 8 & 1 & 1\\ \hline
MOLS case& & & & & & & 1,2,4 & & 3\\ \hline
\end{tabular*}

\begin{tabular*}{1.01\linewidth}{@{\extracolsep{\fill}}| c | c | c | c|c|c|c|c|c|}
\hline
\multicolumn{9}{|c|}{$6\times 7$}\\ \hline
\shortstack{Group cardinality}&1 & 2 & 3 & 4 & 6 &12 & 42 & 126 \\ \hline
\shortstack{Number of triples}&1 & 4 & 1 & 1 &3& 2 &3&1\\ \hline
MOLS case& & & & & & 1,2,4 &  &3\\ \hline
\end{tabular*}

\begin{tabular*}{1.01\linewidth}{@{\extracolsep{\fill}}| c | c | c |}
\hline
\multicolumn{3}{|c|}{$7\times 7$}\\ \hline
\shortstack{Group cardinality}&294 & 882  \\ \hline
\shortstack{Number of triples}&3 & 1 \\ \hline
MOLS case&1,2,4 & 3\\ \hline
\end{tabular*}
\end{center}
\caption{Autotopism groups of triples of order 7.}
\label{Auto7}
\end{table}

We make the quite striking observation that for $n\leq 7$ the triples of MOLR which 
can be 
extended to a triple of MOLS, or MOLR with maximal $k$ for $n=6$, always have a 
non-trivial autotopism group. Given the paucity of such triples for larger $n$  
it would be interesting if this non-triviality can be proven to hold for all 
$n\geq 8$ as well, or if there are non-symmetrical examples as well, especially in 
view of the result of \cite{MMM} that in any triple of MOLS for $n=10$, each of the 
three Latin squares involved must have a trivial symmetry group.

\section{Larger orders}
\label{sec_larger}

Our methods and program can be applied to larger values of $n$ as 
well, but here the number of triples of MOLR quickly becomes unmanageable. It 
is easy to generate all non-isotopic $2\times 8$ triples, and we found that 
the number of these is:
\[
	2\times 8: \quad \num{188126}
\]
Going to $3\times 8$ becomes significantly harder, but we have done this step as 
well. Using the parallel machine this took approximately $\num{150000}$ core hours, 
and saving the full output would have required several terabytes of disk 
space. We found that the number of non-isotopic $3\times 8$ triples of MOLR 
is:
\[
	3\times 8: \quad \num{3 321 281 937 279}
\]
Given that we expect the number of $4\times 8$ triples to be significantly 
larger, we conclude that at this point it is not possible to do a full 
enumeration for $n\geq 8$ while saving the resulting triples to disk. 

However, in Subsection~\ref{ssec_auto} we noted that for $n\leq 7$ the $k\times n$ triples 
of MOLR with maximum $k$ all have non-trivial autotopism group, and that they can 
be constructed by extending triples with lower $k$ and non-trivial autotopisms. 
With this in mind we recursively define a triple of MOLR to be 
\emph{stepwise symmetric} if it has non-trivial autotopism group and is an 
extension of a stepwise symmetric triple of MOLR. We can enumerate stepwise symmetric triples 
of MOLR by starting with the unique $1\times n$ normalized triple and after each 
extension step removing the triples with trivial autotopism group. In 
Table~\ref{Result_table_18} we record the number of stepwise symmetric triples of 
MOLR for $n=8$. As we can see, the total number of non-isotopic $3\times 8$ 
triples is vastly larger than the number of stepwise symmetric such triples. As 
expected, we find that the stepwise symmetric class includes examples of triples of 
$8\times 8$ MOLS. We also note that there are such triples of squares that are not 
part of the projective plane of order 8.

\begin{table}[H]
\begin{center}
\begin{tabular}{|r|r|r|}
\hline
\multirow{2}{*}{Size}  &\multicolumn{2}{c|}{non-isotopic} \\ \cline{2-3}
&    \multicolumn{1}{c|}{All}  & \multicolumn{1}{c|}{Youden}  \\ \hline

$2\times 8$ & \num{10211}      & 0 \\ \hline
$3\times 8$ & \num{22747116}   & 0 \\ \hline
$4\times 8$ & \num{3796573635} & $\lambda_{cc}^p = 1 \quad $ \hfill \num{26747355} \\ \hline
$5\times 8$ & \num{2503469320} & $\lambda_{cc}^p = 2 \quad $ \hfill \num{2503469320} \\ \hline
$6\times 8$ & \num{5572534}    & $\lambda_{cc}^p = 4 \quad $ \hfill \num{5572534} \\ \hline
$7\times 8$ & 751 & $\lambda_{cc}=6$  \hfill 751 \\ \hline
$8\times 8$ & 72  & $\lambda_{cc}=8$  \hfill 72  \\ \hline
\end{tabular}
\end{center}
\caption{The number of stepwise symmetric triples of MOLR for $n=8$.}
\label{Result_table_18}
\end{table}

For $k=7, 8$ the  $k\times n$ rectangles are automatically Youden rectangles.
For $k=4$, the bounds on $\lambda_{cc}^p$ discussed 
in Subsection~\ref{ssec_youden} leave 
$\lambda_{cc}^p = 1$ as a possible non-zero value, and for $k=5$ and $k=6$, we 
must have $2 \leq \lambda_{cc}^p \leq \frac{20}{7}$ and 
$4 \leq \lambda_{cc}^p \leq \frac{30}{7}$ respectively, leaving only the 
possible values $\lambda_{cc}^p = 2$ and $\lambda_{cc}^p = 4$.

We have also checked the $72$ triples of MOLS for maximality with respect to adding 
further squares, and we find that $70$ of the triples are maximal. One of the two 
remaining triples extends to a maximal 4-tuple of MOLS, and the other extends to a 
full 7-tuple of MOLS, corresponding to the projective plane of this order.

As for smaller $n$, for $n=8$ we have also investigated the order of the autotopism 
groups for the stepwise symmetric triples of MOLR. The results are shown in 
Table~\ref{Result_table_28}. Here we note that for each $k\le 7$ 
the most common group order is 2, and similarly to the non-symmetric case there 
are no $8\times 8$ triples with an autotopism group of the smallest possible order, 
namely $2$. We also note that order 4 is more common than order 3, so the number
of triples is not decreasing in the order of the autotopism group.
 
\afterpage{
    \clearpage
    \begin{landscape}
        \centering 
\begin{table}[H]
\scriptsize
\begin{center}

\begin{tabular*}{1.062\linewidth}{@{\extracolsep{\fill}}| c | c | c | c|c|c|c|c|c|c|c|c|c|c|c|}
\hline
\multicolumn{15}{|c|}{$2\times 8$}\\ \hline
Autot. card.&2 & 3 & 4 & 6 & 8 &12 & 16 & 24&32& 48&64&96&128&384\\ \hline
Number of inst.& \num{9014} & 24 & 919 & 22 & 146 &14 & 46 & 2&17& 2&2&1&1&1\\ \hline
\end{tabular*}

\begin{tabular*}{1.062\linewidth}{@{\extracolsep{\fill}}| c | c | c | c|c|c|c|c|c|c|c|c|c|c|}
\hline
\multicolumn{14}{|c|}{$3\times 8$}\\ \hline
Autot. card.& 2 & 3 & 4 & 6 & 8 & 12 & 16& 18 & 24 &36&48 & 144 &576\\ \hline
Number of inst.&\num{22691810} & \num{7359} & \num{45071} & \num{1140} & \num{1476} & 117 & 117 & 4& 5 & 3 &11 &1 & 2\\ \hline
\end{tabular*}

\begin{tabular*}{1.062\linewidth}{@{\extracolsep{\fill}}|c|c|c|c|c|c|c|c|c|c|c|c|c|c|c|c|c|c|c|c|}
\hline
\multicolumn{20}{|c|}{$4\times 8$}\\ \hline
Autot. card.&2 & 3  & 4  & 6 & 8 & 9& 12 & 16 & 18 & 24& 32 & 36 &48 & 64 & 72 & 96 & 144 & 192 & 2304\\ \hline
Number of inst.&\num{3794354460} & \num{55030} & \num{2110956} & \num{8771} & \num{40767} & 38 & 542 & \num{2613} & 26 & 116 & 225 & 3 & 42 & 32 & 2 & 4 & 1 & 5 & 2\\ \hline
\end{tabular*}

\begin{tabular*}{1.062\linewidth}{@{\extracolsep{\fill}}| c | c | c | c|c|c|c|c|c|c|}
\hline
\multicolumn{10}{|c|}{$5\times 8$}\\ \hline
Autot. card.&2 & 3 & 4 & 6 & 8 &12 & 16 & 24 &32\\ \hline
Number of inst.&\num{2502639867} &418 & \num{821082} & 970 & \num{6808} &28& 134 & 7 & 6\\ \hline
\end{tabular*}

\begin{tabular*}{1.062\linewidth}{@{\extracolsep{\fill}}| c | c | c | c|c|c|c|c|c|c|c|c|}
\hline
\multicolumn{12}{|c|}{$6\times 8$}\\ \hline
Autot. card.& 2 & 4 & 6 & 8 & 12 & 16 & 18 & 24 & 36 & 48& 96 \\ \hline
Number of inst.& \num{5488623} & \num{79327} & 154 & \num{4200} & 10 & 202 & 2 & 10 & 1 & 4& 1 \\ \hline
\end{tabular*}

\begin{tabular*}{1.062\linewidth}{@{\extracolsep{\fill}}| c | c | c | c|c|c|c|c|c|c|c|c|}
\hline
\multicolumn{12}{|c|}{$7\times 8$}\\ \hline
Autot. card.& 2 & 4 & 6 & 8 & 12 & 16 & 18 & 24 & 36 & 48& 56 \\ \hline
Number of inst.& 373 & 256 & 11 & 62 & 28 & 10 & 2 & 5 & 1 & 2& 1 \\ \hline
\end{tabular*}

\begin{tabular*}{1.062\linewidth}{@{\extracolsep{\fill}}| c | c | c | c|c|c|c|c|c|c|}
\hline
\multicolumn{10}{|c|}{$8\times 8$}\\ \hline
Autot. card.&8 & 16 & 24 & 32 & 48 &64 & 96 & 192 &448\\ \hline
Number of inst.&4 &41 & 2 & 18 & 1 & 2 & 2 & 1 &1\\ \hline
\end{tabular*}

\caption{The order of the autotopism group for the stepwise symmetric triples 
	of $k\times 8$ rectangles.}
\label{Result_table_28}
\end{center}
\end{table}

    \end{landscape}
    \clearpage
}

One noteworthy feature is the number of rows in the triples with the maximum 
autotopism group order. For $n\leq 7$, with the exception of $n=6$, 
this has been achieved by triples with the maximum possible $k$, 
but when $n=8$, we instead find the maximum among the $4\times 8$ triples, 
where we find two triples with autotopism group of order $2304=2^8 \cdot 3 ^2$. These 
triples are presented in Figure~\ref{figsym}. Both of these triples are maximal 
with respect to addition of more rows.

\begin{figure}[H]
\setlength\tabcolsep{3.7pt}

\begin{subfigure}[]{1\textwidth}
\centering
\begin{tabular}{|c|c|c|c||c|c|c|c| c |c|c|c|c||c|c|c|c| c |c|c|c|c||c|c|c|c|}
  \cline{1-8} \cline{10-17} \cline{19-26} 
0 &  1 &  2 &  3 &  4 &  5 &  6 &  7 &&  0 &  1 &  2 &  3 &  4 &  5 &  6 &  7 &&  0 &  1 &  2 &  3 &  4 &  5 &  6 &  7 \\ \cline{1-8} \cline{10-17} \cline{19-26} 

3 &  2 &  1 &  0 &  7 &  6 &  5 &  4 &&  2 &  3 &  0 &  1 &  6 &  7 &  4 &  5 &&  1 &  0 &  3 &  2 &  5 &  4 &  7 &  6 \\ \cline{1-8} \cline{10-17} \cline{19-26} 

2 &  3 &  0 &  1 &  6 &  7 &  4 &  5 &&  1 &  0 &  3 &  2 &  5 &  4 &  7 &  6 &&  3 &  2 &  1 &  0 &  7 &  6 &  5 &  4 \\ \cline{1-8} \cline{10-17} \cline{19-26} 

1 &  0 &  3 &  2 &  5 &  4 &  7 &  6 &&  3 &  2 &  1 &  0 &  7 &  6 &  5 &  4 &&  2 &  3 &  0 &  1 &  6 &  7 &  4 &  5 \\ 
\cline{1-8} \cline{10-17} \cline{19-26} 
\end{tabular}
\subcaption{First triple of mutually orthogonal $8 \times 4$ MOLR.}
\label{triple1}
\end{subfigure}
\vspace*{2em}

\begin{subfigure}[]{1 \textwidth}
\centering
\begin{tabular}{|c|c|c|c||c|c|c|c| c |c|c|c|c||c|c|c|c| c |c|c|c|c||c|c|c|c|}
  \cline{1-8} \cline{10-17} \cline{19-26} 
0 &  1 &  2 &  3 &  4 &  5 &  6 &  7  &&  0 &  1 &  2 &  3 &  4 &  5 &  6 &  7 &&  0 &  1 &  2 &  3 &  4 &  5 &  6 &  7 \\ \cline{1-8} \cline{10-17} \cline{19-26} 

3 &  2 &  1 &  0 &  7 &  6 &  5 &  4  &&  2 &  3 &  0 &  1 &  6 &  7 &  4 &  5 &&  1 &  0 &  3 &  2 &  5 &  4 &  7 &  6 \\ \cline{1-8} \cline{10-17} \cline{19-26} 

2 &  3 &  0 &  1 &  5 &  4 &  7 &  6  &&  1 &  0 &  3 &  2 &  7 &  6 &  5 &  4 &&  3 &  2 &  1 &  0 &  6 &  7 &  4 &  5 \\ \cline{1-8} \cline{10-17} \cline{19-26} 

1 &  0 &  3 &  2 &  6 &  7 &  4 &  5  &&  3 &  2 &  1 &  0 &  5 &  4 &  7 &  6 &&  2 &  3 &  0 &  1 &  7 &  6 &  5 &  4 \\ \cline{1-8} \cline{10-17} \cline{19-26}
\end{tabular}
\subcaption{Second triple of mutually orthogonal $8 \times 4$ MOLR.}
\label{triple2}
\end{subfigure}

\caption{The two $4\times 8$ triples with autotopism group of order 2304.}
\label{figsym}
\end{figure}

As pointed out to us by Rosemary Bailey these examples can be generated 
from cosets of the elementary Abelian group of order 32 (see \cite{RB}), or even simpler, 
by juxtaposing two triples of MOLS of order 4. The double line in the above
examples indicates this juxtaposition, and we have chosen representatives from the 
isotopism classes of triples of $4 \times 4$ MOLS so that the left side of both the 
above examples of triples of $4 \times 8$ MOLR corresponds 
exactly to our chosen representative of the isotopic triples of MOLS of order 4. 
In Figure~\ref{triple1}, the right side also corresponds exactly to the same triple of 
$4 \times 4$ MOLS, by renaming the symbols $4,5,6,7$ to $0,1,2,3$, respectively. In 
Figure~\ref{triple2}, rows 3 and 4 in the $4 \times 4$ MOLS on the right side are switched.

Based on this observation, we formulate the following result, which follows
immediately from juxtaposing the two $t$-tuples of MOLR, and introducing new
symbol names for the triple of MOLR on the right side.

\begin{proposition}\label{prop:juxta}
	If there exists a $t$-tuple of $k \times n_1$ MOLR, and a $t$-tuple of 
	$k \times n_2$ MOLR, then there exists a $t$-tuple of $k \times (n_1 + n_2)$ 
	MOLR.
\end{proposition}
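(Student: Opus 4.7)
The plan is a direct construction: given a $t$-tuple $(A^{(1)}, \dots, A^{(t)})$ of $k \times n_1$ MOLR on symbol set $\{0, 1, \dots, n_1-1\}$ and a $t$-tuple $(B^{(1)}, \dots, B^{(t)})$ of $k \times n_2$ MOLR, first relabel the symbols of every $B^{(i)}$ by the shift $s \mapsto s + n_1$, so that the $B^{(i)}$ use the symbol set $\{n_1, n_1+1, \dots, n_1+n_2-1\}$, which is disjoint from the symbol set of the $A^{(i)}$. Since relabeling symbols separately in each rectangle preserves both the Latin rectangle property and the pairwise orthogonality, the shifted tuple is still a $t$-tuple of MOLR. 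Now for each $i \in \{1, \dots, t\}$ define the $k \times (n_1+n_2)$ array $C^{(i)}$ by horizontally juxtaposing $A^{(i)}$ on the left and the shifted $B^{(i)}$ on the right.

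Next I would verify that each $C^{(i)}$ is a Latin rectangle on the symbol set $\{0, 1, \dots, n_1+n_2-1\}$. In any row, the left half contributes exactly one copy of each symbol in $\{0,\dots,n_1-1\}$ (since $A^{(i)}$ is a Latin rectangle) and the right half contributes exactly one copy of each symbol in $\{n_1,\dots,n_1+n_2-1\}$, so the row is a permutation of the full symbol set. In any column of $C^{(i)}$, either all entries lie in $\{0,\dots,n_1-1\}$ (columns $1, \dots, n_1$) or all entries lie in $\{n_1,\dots,n_1+n_2-1\}$ (columns $n_1+1, \dots, n_1+n_2$), and the distinctness within a column is inherited from $A^{(i)}$ or from the shifted $B^{(i)}$.

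Then the key step is to check pairwise orthogonality of $C^{(i)}$ and $C^{(j)}$ for $i \neq j$. The ordered pair at position $(r,c)$ in $(C^{(i)}, C^{(j)})$ is either an element of $\{0,\dots,n_1-1\}^2$ (when $c \leq n_1$) or an element of $\{n_1,\dots,n_1+n_2-1\}^2$ (when $c > n_1$); in particular, pairs arising from left-half positions cannot coincide with pairs arising from right-half positions, because the symbol ranges are disjoint. Within the left-half positions, distinctness of the $k n_1$ ordered pairs follows from orthogonality of $A^{(i)}$ and $A^{(j)}$, and within the right-half positions, distinctness of the $k n_2$ ordered pairs follows from orthogonality of the shifted $B^{(i)}$ and $B^{(j)}$. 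Since orthogonality of Latin rectangles only requires that each ordered pair appear at most once, the total of $k(n_1+n_2)$ pairs is without repetition, which completes the argument.

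The proof involves no real obstacle; the only point requiring attention is the symbol relabeling that forces the left-half and right-half pairs into disjoint subsets of $\{0,\dots,n_1+n_2-1\}^2$, which is what prevents orthogonality failures across the two halves.
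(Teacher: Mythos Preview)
Your proof is correct and follows exactly the approach the paper indicates: the paper does not give a formal proof but simply states that the result ``follows immediately from juxtaposing the two $t$-tuples of MOLR, and introducing new symbol names for the triple of MOLR on the right side.'' You have spelled out precisely this construction and supplied the easy verifications the paper leaves to the reader.
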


For example, there exists a $4$-tuple of $5 \times 10$ MOLR, since there 
exists a $4$-tuple of $5 \times 5$ MOLR. However, the triples of 
MOLR formed by applying 
Proposition~\ref{prop:juxta} will clearly have poor properties regarding adding 
more rows.

\section*{Acknowledgments}

The computational work was performed on resources provided by the 
Swedish National Infrastructure for Computing (SNIC) at 
High Performance Computing Center North (HPC2N).
This work was supported by the Swedish strategic research programme eSSENCE.    
This work was supported by The Swedish Research Council grant 2014-4897.

\newcommand{\etalchar}[1]{$^{#1}$}
\providecommand{\bysame}{\leavevmode\hbox to3em{\hrulefill}\thinspace}
\providecommand{\MR}{\relax\ifhmode\unskip\space\fi MR }
\providecommand{\MRhref}[2]{%
  \href{http://www.ams.org/mathscinet-getitem?mr=#1}{#2}
}
\providecommand{\href}[2]{#2}

\newpage

\appendix

\section{The unique (up to isotopism) triple of \boldmath{$4\times 4$} MOLS}
\label{appendix:MOLS4}

\begin{table}[H]
\centering{
\begin{tabular}{  | c | c|c|c|c|c |  c|c|c|c|c | c|c|c|}
\cline{1-4} \cline{6-9} \cline{11-14} 
0& 1& 2& 3& &0& 1& 2& 3& &0& 1& 2& 3\\ \cline{1-4} \cline{6-9} \cline{11-14}
3& 2& 1& 0& &2& 3& 0& 1& &1& 0& 3& 2\\ \cline{1-4} \cline{6-9} \cline{11-14}
2& 3& 0& 1& &1& 0& 3& 2& &3& 2& 1& 0\\ \cline{1-4} \cline{6-9} \cline{11-14}
1& 0& 3& 2& &3& 2& 1& 0& &2& 3& 0& 1\\ \cline{1-4} \cline{6-9} \cline{11-14}
\end{tabular}}
\label{MOLS-4}
\end{table}

\section{The unique (up to isotopism) triple of \boldmath{$5\times 5$} MOLS}
\label{appendix:MOLS5}

\begin{table}[H]
\centering{
\begin{tabular}{ | c | c | c|c|c|c|c | c|  c|c|c|c|c | c| c|c|c|}
\cline{1-5} \cline{7-11} \cline{13-17}
0&1&2&3&4&  &0&1&2&3&4&  &0&1&2&3&4\\ \cline{1-5} \cline{7-11} \cline{13-17}
4&3&1&0&2&  &3&2&4&1&0&  &2&0&3&4&1\\ \cline{1-5} \cline{7-11} \cline{13-17}
3&2&4&1&0&  &4&3&1&0&2&  &1&4&0&2&3\\ \cline{1-5} \cline{7-11} \cline{13-17}
2&0&3&4&1&  &1&4&0&2&3&  &3&2&4&1&0\\ \cline{1-5} \cline{7-11} \cline{13-17}
1&4&0&2&3&  &2&0&3&4&1&  &4&3&1&0&2\\ \cline{1-5} \cline{7-11} \cline{13-17}
\end{tabular}}
\label{MOLS-5}
\end{table}

\section{The seven non-isotopic triples of \boldmath{$5\times 6$} MOLR}
\label{appendix:MOLR6}

\begin{enumerate}
\item \hspace*{0em}

\begin{table}[H]
\begin{tabular}{ | c | c | c | c|c|c|c|c | c | c | c|c|c|c|c | c | c | c|c|c|}
\cline{1-6} \cline{8-13} \cline{15-20} 
0& 1& 2& 3& 4& 5&& 0& 1& 2& 3& 4& 5&& 0& 1& 2& 3& 4& 5\\ \cline{1-6} \cline{8-13} \cline{15-20} 
5& 4& 3& 2& 1& 0&& 4& 5& 1& 0& 3& 2&& 3& 2& 5& 4& 0& 1\\ \cline{1-6} \cline{8-13} \cline{15-20} 
4& 5& 1& 0& 3& 2&& 3& 2& 5& 4& 0& 1&& 5& 0& 4& 2& 1& 3\\ \cline{1-6} \cline{8-13} \cline{15-20} 
3& 2& 5& 4& 0& 1&& 2& 4& 3& 1& 5& 0&& 4& 5& 1& 0& 3& 2\\ \cline{1-6} \cline{8-13} \cline{15-20} 
2& 0& 4& 1& 5& 3&& 5& 3& 0& 2& 1& 4&& 1& 4& 3& 5& 2& 0\\ \cline{1-6} \cline{8-13} \cline{15-20} 
\end{tabular}
\end{table}

\item \hspace*{0em}
\begin{table}[H]
\begin{tabular}{ | c | c | c | c|c|c|c|c | c | c | c|c|c|c|c | c | c | c|c|c|}
\cline{1-6} \cline{8-13} \cline{15-20} 
0& 1& 2& 3& 4& 5&& 0& 1& 2& 3& 4& 5&& 0& 1& 2& 3& 4& 5\\ \cline{1-6} \cline{8-13} \cline{15-20} 
5& 4& 3& 2& 1& 0&& 4& 3& 5& 1& 0& 2&& 3& 5& 4& 0& 2& 1\\ \cline{1-6} \cline{8-13} \cline{15-20} 
4& 3& 5& 1& 0& 2&& 5& 4& 3& 2& 1& 0&& 2& 0& 1& 4& 5& 3\\ \cline{1-6} \cline{8-13} \cline{15-20} 
3& 5& 4& 0& 2& 1&& 2& 0& 1& 4& 5& 3&& 5& 4& 3& 2& 1& 0\\ \cline{1-6} \cline{8-13} \cline{15-20} 
2& 0& 1& 4& 5& 3&& 3& 5& 4& 0& 2& 1&& 4& 3& 5& 1& 0& 2\\ \cline{1-6} \cline{8-13} \cline{15-20}  \cline{15-20} 
\end{tabular}
\end{table}
\pagebreak[4]

\item \hspace*{0em}
\begin{table}[H]
\begin{tabular}{ | c | c | c | c|c|c|c|c | c | c | c|c|c|c|c | c | c | c|c|c|}
\cline{1-6} \cline{8-13} \cline{15-20} 
0& 1& 2& 3& 4& 5&& 0& 1& 2& 3& 4& 5&& 0& 1& 2& 3& 4& 5\\ \cline{1-6} \cline{8-13} \cline{15-20} 
5& 4& 3& 2& 1& 0&& 4& 3& 5& 0& 2& 1&& 3& 5& 4& 1& 0& 2\\ \cline{1-6} \cline{8-13} \cline{15-20} 
4& 3& 5& 0& 2& 1&& 5& 4& 3& 2& 1& 0&& 2& 0& 1& 4& 5& 3\\ \cline{1-6} \cline{8-13} \cline{15-20} 
3& 5& 4& 1& 0& 2&& 2& 0& 1& 4& 5& 3&& 5& 4& 3& 2& 1& 0\\ \cline{1-6} \cline{8-13} \cline{15-20} 
1& 2& 0& 5& 3& 4&& 3& 5& 4& 1& 0& 2&& 4& 3& 5& 0& 2& 1\\ \cline{1-6} \cline{8-13} \cline{15-20}
\end{tabular}
\end{table}

\item \hspace*{0em}
\begin{table}[H]
\begin{tabular}{ | c | c | c | c|c|c|c|c | c | c | c|c|c|c|c | c | c | c|c|c|}
\cline{1-6} \cline{8-13} \cline{15-20} 
0& 1& 2& 3& 4& 5&& 0& 1& 2& 3& 4& 5&& 0& 1& 2& 3& 4& 5\\ \cline{1-6} \cline{8-13} \cline{15-20} 
5& 4& 3& 1& 0& 2&& 4& 3& 5& 2& 1& 0&& 2& 0& 1& 5& 3& 4\\ \cline{1-6} \cline{8-13} \cline{15-20} 
4& 3& 5& 2& 1& 0&& 1& 2& 0& 5& 3& 4&& 5& 4& 3& 0& 2& 1\\ \cline{1-6} \cline{8-13} \cline{15-20} 
2& 0& 1& 5& 3& 4&& 3& 5& 4& 1& 0& 2&& 1& 2& 0& 4& 5& 3\\ \cline{1-6} \cline{8-13} \cline{15-20} 
1& 2& 0& 4& 5& 3&& 5& 4& 3& 0& 2& 1&& 4& 3& 5& 2& 1& 0\\ \cline{1-6} \cline{8-13} \cline{15-20} 
\end{tabular}
\end{table}

\item \hspace*{0em}
\begin{table}[H]
\begin{tabular}{ | c | c | c | c|c|c|c|c | c | c | c|c|c|c|c | c | c | c|c|c|}
\cline{1-6} \cline{8-13} \cline{15-20} 
0& 1& 2& 3& 4& 5&& 0& 1& 2& 3& 4& 5&& 0& 1& 2& 3& 4& 5\\ \cline{1-6} \cline{8-13} \cline{15-20} 
5& 4& 3& 1& 0& 2&& 4& 3& 5& 2& 1& 0&& 2& 0& 1& 5& 3& 4\\ \cline{1-6} \cline{8-13} \cline{15-20} 
4& 3& 5& 2& 1& 0&& 1& 2& 0& 5& 3& 4&& 5& 4& 3& 0& 2& 1\\ \cline{1-6} \cline{8-13} \cline{15-20} 
2& 0& 1& 5& 3& 4&& 3& 5& 4& 1& 0& 2&& 1& 2& 0& 4& 5& 3\\ \cline{1-6} \cline{8-13} \cline{15-20} 
1& 2& 0& 4& 5& 3&& 5& 4& 3& 0& 2& 1&& 3& 5& 4& 2& 1& 0\\ \cline{1-6} \cline{8-13} \cline{15-20}
\end{tabular}
\end{table}

\item \hspace*{0em}
\begin{table}[H]
\begin{tabular}{ | c | c | c | c|c|c|c|c | c | c | c|c|c|c|c | c | c | c|c|c|}
\cline{1-6} \cline{8-13} \cline{15-20} 
0& 1& 2& 3& 4& 5&& 0& 1& 2& 3& 4& 5&& 0& 1& 2& 3& 4& 5\\ \cline{1-6} \cline{8-13} \cline{15-20} 
5& 4& 3& 1& 0& 2&& 4& 3& 5& 2& 1& 0&& 2& 0& 1& 5& 3& 4\\ \cline{1-6} \cline{8-13} \cline{15-20} 
4& 3& 5& 0& 2& 1&& 2& 0& 1& 5& 3& 4&& 1& 2& 0& 4& 5& 3\\ \cline{1-6} \cline{8-13} \cline{15-20} 
3& 5& 4& 2& 1& 0&& 1& 2& 0& 4& 5& 3&& 5& 4& 3& 1& 0& 2\\ \cline{1-6} \cline{8-13} \cline{15-20} 
1& 2& 0& 4& 5& 3&& 3& 5& 4& 1& 0& 2&& 4& 3& 5& 2& 1& 0\\ \cline{1-6} \cline{8-13} \cline{15-20} 
\end{tabular}
\end{table}

\item \hspace*{0em}
\begin{table}[H]
\begin{tabular}{ | c | c | c | c|c|c|c|c | c | c | c|c|c|c|c | c | c | c|c|c|}
\cline{1-6} \cline{8-13} \cline{15-20} 
0& 1& 2& 3& 4& 5&& 0& 1& 2& 3& 4& 5&& 0& 1& 2& 3& 4& 5\\ \cline{1-6} \cline{8-13} \cline{15-20} 
5& 4& 3& 1& 0& 2&& 4& 3& 5& 2& 1& 0&& 2& 0& 1& 5& 3& 4\\ \cline{1-6} \cline{8-13} \cline{15-20} 
4& 3& 5& 0& 2& 1&& 2& 0& 1& 5& 3& 4&& 1& 2& 0& 4& 5& 3\\ \cline{1-6} \cline{8-13} \cline{15-20} 
3& 5& 4& 2& 1& 0&& 1& 2& 0& 4& 5& 3&& 5& 4& 3& 0& 2& 1\\ \cline{1-6} \cline{8-13} \cline{15-20} 
2& 0& 1& 5& 3& 4&& 5& 4& 3& 0& 2& 1&& 3& 5& 4& 1& 0& 2\\ \cline{1-6} \cline{8-13} \cline{15-20}
\end{tabular}
\end{table}
\end{enumerate}

\newpage

\section{The four non-isotopic triples of \boldmath{$7\times 7$} MOLS}
\label{appendix:MOLS7}

\begin{enumerate}
\item \hspace*{0em}
\begin{table}[H]
\begin{tabular}{ | c | c | c | c|c|c|c|c | c | c | c|c|c|c|c | c | c | c|c|c|c|c|c|}
\cline{1-7} \cline{9-15} \cline{17-23}
0&1&2&3&4&5&6& &0&1&2&3&4&5&6& &0&1&2&3&4&5&6\\ \cline{1-7} \cline{9-15} \cline{17-23}
6&5&4&2&1&0&3& &5&4&3&6&2&1&0& &4&3&0&5&6&2&1\\ \cline{1-7} \cline{9-15} \cline{17-23}
5&4&3&6&2&1&0& &6&5&4&2&1&0&3& &2&6&5&1&0&3&4\\ \cline{1-7} \cline{9-15} \cline{17-23}
4&3&0&5&6&2&1& &2&6&5&1&0&3&4& &3&0&1&4&5&6&2\\ \cline{1-7} \cline{9-15} \cline{17-23}
3&0&1&4&5&6&2& &1&2&6&0&3&4&5& &6&5&4&2&1&0&3\\ \cline{1-7} \cline{9-15} \cline{17-23}
2&6&5&1&0&3&4& &4&3&0&5&6&2&1& &1&2&6&0&3&4&5\\ \cline{1-7} \cline{9-15} \cline{17-23}
1&2&6&0&3&4&5& &3&0&1&4&5&6&2& &5&4&3&6&2&1&0\\ \cline{1-7} \cline{9-15} \cline{17-23}
\end{tabular}
\end{table}

\item \hspace*{0em}
\begin{table}[H]
\begin{tabular}{ | c | c | c | c|c|c|c|c | c | c | c|c|c|c|c | c | c | c|c|c|c|c|c|}
\cline{1-7} \cline{9-15} \cline{17-23}
0&1&2&3&4&5&6& &0&1&2&3&4&5&6& &0&1&2&3&4&5&6\\ \cline{1-7} \cline{9-15} \cline{17-23}
6&5&4&2&1&0&3& &5&4&3&6&2&1&0& &3&0&1&4&5&6&2\\ \cline{1-7} \cline{9-15} \cline{17-23}
5&4&3&6&2&1&0& &6&5&4&2&1&0&3& &1&2&6&0&3&4&5\\ \cline{1-7} \cline{9-15} \cline{17-23}
4&3&0&5&6&2&1& &2&6&5&1&0&3&4& &6&5&4&2&1&0&3\\ \cline{1-7} \cline{9-15} \cline{17-23}
3&0&1&4&5&6&2& &1&2&6&0&3&4&5& &4&3&0&5&6&2&1\\ \cline{1-7} \cline{9-15} \cline{17-23}
2&6&5&1&0&3&4& &4&3&0&5&6&2&1& &5&4&3&6&2&1&0\\ \cline{1-7} \cline{9-15} \cline{17-23}
1&2&6&0&3&4&5& &3&0&1&4&5&6&2& &2&6&5&1&0&3&4\\ \cline{1-7} \cline{9-15} \cline{17-23}
\end{tabular}
\end{table}

\item \hspace*{0em}
\begin{table}[H]
\begin{tabular}{ | c | c | c | c|c|c|c|c | c | c | c|c|c|c|c | c | c | c|c|c|c|c|c|}
\cline{1-7} \cline{9-15} \cline{17-23}
0&1&2&3&4&5&6& &0&1&2&3&4&5&6& &0&1&2&3&4&5&6\\ \cline{1-7} \cline{9-15} \cline{17-23}
6&5&4&2&1&0&3& &4&3&0&5&6&2&1& &3&0&1&4&5&6&2\\ \cline{1-7} \cline{9-15} \cline{17-23}
5&4&3&6&2&1&0& &2&6&5&1&0&3&4& &1&2&6&0&3&4&5\\ \cline{1-7} \cline{9-15} \cline{17-23}
4&3&0&5&6&2&1& &3&0&1&4&5&6&2& &6&5&4&2&1&0&3\\ \cline{1-7} \cline{9-15} \cline{17-23}
3&0&1&4&5&6&2& &6&5&4&2&1&0&3& &4&3&0&5&6&2&1\\ \cline{1-7} \cline{9-15} \cline{17-23}
2&6&5&1&0&3&4& &1&2&6&0&3&4&5& &5&4&3&6&2&1&0\\ \cline{1-7} \cline{9-15} \cline{17-23}
1&2&6&0&3&4&5& &5&4&3&6&2&1&0& &2&6&5&1&0&3&4\\ \cline{1-7} \cline{9-15} \cline{17-23}
\end{tabular}
\end{table}

\item \hspace*{0em}
\begin{table}[H]
\begin{tabular}{ | c | c | c | c|c|c|c|c | c | c | c|c|c|c|c | c | c | c|c|c|c|c|c|}
\cline{1-7} \cline{9-15} \cline{17-23}
0&1&2&3&4&5&6& &0&1&2&3&4&5&6& &0&1&2&3&4&5&6\\ \cline{1-7} \cline{9-15} \cline{17-23}
6&5&4&2&1&0&3& &4&3&0&5&6&2&1& &1&2&6&0&3&4&5\\ \cline{1-7} \cline{9-15} \cline{17-23}
5&4&3&6&2&1&0& &2&6&5&1&0&3&4& &3&0&1&4&5&6&2\\ \cline{1-7} \cline{9-15} \cline{17-23}
4&3&0&5&6&2&1& &3&0&1&4&5&6&2& &5&4&3&6&2&1&0\\ \cline{1-7} \cline{9-15} \cline{17-23}
3&0&1&4&5&6&2& &6&5&4&2&1&0&3& &2&6&5&1&0&3&4\\ \cline{1-7} \cline{9-15} \cline{17-23}
2&6&5&1&0&3&4& &1&2&6&0&3&4&5& &6&5&4&2&1&0&3\\ \cline{1-7} \cline{9-15} \cline{17-23}
1&2&6&0&3&4&5& &5&4&3&6&2&1&0& &4&3&0&5&6&2&1\\ \cline{1-7} \cline{9-15} \cline{17-23}
\end{tabular}
\end{table}

 \end{enumerate}

\section{Maximum sets of pairwise orthogonal Youden rectangles}
\label{YouT}

\bigskip
\subsection{An example of a 4-tuple of mutually orthogonal \boldmath{$5 \times 6$} Youden rectangles}
\begin{table}[H]
\setlength\tabcolsep{5pt}
\begin{tabular}{|c|c|c|c|c|c|c |c|c|c|c|c|c|c |c|c|c|c|c|c|c |c|c|c|c|c|c|c|}
\cline{1-6} \cline{8-13} \cline{15-20} \cline{22-27}
0 & 1 & 2 & 3 & 4 & 5 && 0 & 1 & 2 & 3 & 4 & 5 && 0 & 1 & 2 & 3 & 4 & 5 && 0 & 1 & 2 & 3 & 4 & 5\\ \cline{1-6} \cline{8-13} \cline{15-20} \cline{22-27}
5 & 4 & 3 & 2 & 1 & 0 && 4 & 5 & 1 & 0 & 3 & 2 && 3 & 2 & 5 & 4 & 0 & 1 && 2 & 0 & 4 & 1 & 5 & 3\\ \cline{1-6} \cline{8-13} \cline{15-20} \cline{22-27}
4 & 5 & 1 & 0 & 3 & 2 && 3 & 2 & 5 & 4 & 0 & 1 && 5 & 0 & 4 & 2 & 1 & 3 && 1 & 4 & 3 & 5 & 2 & 0\\ \cline{1-6} \cline{8-13} \cline{15-20} \cline{22-27}
3 & 2 & 5 & 4 & 0 & 1 && 2 & 4 & 3 & 1 & 5 & 0 && 4 & 5 & 1 & 0 & 3 & 2 && 5 & 3 & 0 & 2 & 1 & 4\\ \cline{1-6} \cline{8-13} \cline{15-20} \cline{22-27}
2 & 0 & 4 & 1 & 5 & 3 && 5 & 3 & 0 & 2 & 1 & 4 && 1 & 4 & 3 & 5 & 2 & 0 && 4 & 2 & 5 & 0 & 3 & 1\\ \cline{1-6} \cline{8-13} \cline{15-20} \cline{22-27}
\end{tabular}
\end{table}

\noindent
\subsection{An example of a 6-tuple of mutually orthogonal \boldmath{$3 \times 7$} Youden rectangles}
\begin{table}[H]
\setlength\tabcolsep{5pt}
\begin{tabular}{|c|c|c|c|c|c|c| c |c|c|c|c|c|c|c| c |c|c|c|c|c|c|c|}
\cline{1-7} \cline{9-15} \cline{17-23}
0 & 1 & 2 & 3 & 4 & 5 & 6 && 0 & 1 & 2 & 3 & 4 & 5 & 6 && 0 & 1 & 2 & 3 & 4 & 5 & 6\\ \cline{1-7} \cline{9-15} \cline{17-23}
6 & 5 & 4 & 2 & 1 & 0 & 3 && 5 & 4 & 3 & 6 & 2 & 1 & 0 && 4 & 3 & 0 & 5 & 6 & 2 & 1\\ \cline{1-7} \cline{9-15} \cline{17-23}
2 & 6 & 5 & 1 & 0 & 3 & 4 && 3 & 0 & 1 & 4 & 5 & 6 & 2 && 1 & 2 & 6 & 0 & 3 & 4 & 5\\ \cline{1-7} \cline{9-15} \cline{17-23}
\end{tabular}
\\ \\ \\
\setlength\tabcolsep{5pt}
\begin{tabular}{|c|c|c|c|c|c|c| c |c|c|c|c|c|c|c| c |c|c|c|c|c|c|c|}
\cline{1-7} \cline{9-15} \cline{17-23}
0 & 1 & 2 & 3 & 4 & 5 & 6 && 0 & 1 & 2 & 3 & 4 & 5 & 6 && 0 & 1 & 2 & 3 & 4 & 5 & 6\\ \cline{1-7} \cline{9-15} \cline{17-23}
3 & 0 & 1 & 4 & 5 & 6 & 2 && 2 & 6 & 5 & 1 & 0 & 3 & 4 && 1 & 2 & 6 & 0 & 3 & 4 & 5\\ \cline{1-7} \cline{9-15} \cline{17-23}
5 & 4 & 3 & 6 & 2 & 1 & 0 && 6 & 5 & 4 & 2 & 1 & 0 & 3 && 4 & 3 & 0 & 5 & 6 & 2 & 1\\ \cline{1-7} \cline{9-15} \cline{17-23}
\end{tabular}
\end{table}

\noindent
\subsection{An example of a 6-tuple of mutually orthogonal \boldmath{$4 \times 7$} Youden rectangles}
\begin{table}[H]
\setlength\tabcolsep{5pt}
\begin{tabular}{|c|c|c|c|c|c|c| c |c|c|c|c|c|c|c| c |c|c|c|c|c|c|c|}
\cline{1-7} \cline{9-15} \cline{17-23}
0 & 1 & 2 & 3 & 4 & 5 & 6 && 0 & 1 & 2 & 3 & 4 & 5 & 6 && 0 & 1 & 2 & 3 & 4 & 5 & 6\\ \cline{1-7} \cline{9-15} \cline{17-23}
6 & 5 & 4 & 2 & 1 & 0 & 3 && 5 & 4 & 3 & 6 & 2 & 1 & 0 && 4 & 3 & 0 & 5 & 6 & 2 & 1\\ \cline{1-7} \cline{9-15} \cline{17-23}
5 & 4 & 3 & 6 & 2 & 1 & 0 && 6 & 5 & 4 & 2 & 1 & 0 & 3 && 2 & 6 & 5 & 1 & 0 & 3 & 4\\ \cline{1-7} \cline{9-15} \cline{17-23}
4 & 3 & 0 & 5 & 6 & 2 & 1 && 2 & 6 & 5 & 1 & 0 & 3 & 4 && 3 & 0 & 1 & 4 & 5 & 6 & 2\\ \cline{1-7} \cline{9-15} \cline{17-23}
\end{tabular}
\\ \\ \\
\setlength\tabcolsep{5pt}
\begin{tabular}{|c|c|c|c|c|c|c| c |c|c|c|c|c|c|c| c |c|c|c|c|c|c|c|}
\cline{1-7} \cline{9-15} \cline{17-23}
0 & 1 & 2 & 3 & 4 & 5 & 6 && 0 & 1 & 2 & 3 & 4 & 5 & 6 && 0 & 1 & 2 & 3 & 4 & 5 & 6\\ \cline{1-7} \cline{9-15} \cline{17-23}
3 & 0 & 1 & 4 & 5 & 6 & 2 && 2 & 6 & 5 & 1 & 0 & 3 & 4 && 1 & 2 & 6 & 0 & 3 & 4 & 5\\ \cline{1-7} \cline{9-15} \cline{17-23}
1 & 2 & 6 & 0 & 3 & 4 & 5 && 4 & 3 & 0 & 5 & 6 & 2 & 1 && 3 & 0 & 1 & 4 & 5 & 6 & 2\\ \cline{1-7} \cline{9-15} \cline{17-23}
6 & 5 & 4 & 2 & 1 & 0 & 3 && 1 & 2 & 6 & 0 & 3 & 4 & 5 && 5 & 4 & 3 & 6 & 2 & 1 & 0\\ \cline{1-7} \cline{9-15} \cline{17-23}
\end{tabular}
\end{table}

\newpage

\section{Algorithms}
Here we give a more detailed description of the algorithms used.

\subsection{Extension Finding}
\label{algo1}

The algorithm to find all normalized non-isotopic triples of MOLR 
of size $k\times n$ is recursive and based on a breadth first search. It
takes all normalized non-isotopic triples of MOLR of size $(k-1)\times 
n$ as input, finds an extension of size $k\times n$
and saves the result in a file. Note that nothing in the algorithm requires
that the input rectangles are normalized, but in practice, we only used normalized
rectangles as input.

It should also be noted that the algorithm will not necessarily produce normalized 
triples, and it may also produce many isotopic triples. The second part, described 
in the next subsection, picks out non-isotopic triples and outputs 
each such representative in normalized form.

The search when adding the $k$-th rows of the rectangles starts with 
the lexicographically largest permutation and goes to the smallest one, 
i.e., the search is done in reverse order.

Clearly, only a small fraction of all $n!$ possible permutations can be used as 
the $k$-th row of the Latin rectangles. Thus, to speed up the 
algorithm in practice, for each triple of MOLR we generate three lists of permutations, 
$A_k$, $B_k$, $C_k$ that are possible as $k$-th rows of the corresponding Latin 
rectangles and use only those. For the purposes of keeping the pseudocode 
simple, this restriction is not reflected, and all permutations are taken
from $S_n$, the full set of permutations on $n$ elements.   

Pseudocode for the process of finding an extension of a triple of MOLR is 
given in Algorithm~\ref{algo1_ext}.

\bigskip

\begin{algorithm}[H]
\label{algo1_ext}
\caption{Extension of a triple of MOLR.}
  \KwIn{$(k-1)\times n$ normalized triple of MOLR $(A,B,C)$. }
  \KwOut{ List $L$ of $k\times n$ triples of MOLR.}
\Begin{
    $L=\emptyset$.\\
 	\ForEach {  $\alpha \in S_n$} 
 		{Extend  rectangle $A$ by $\alpha$ and check the Latin rectangle condition.\\
 		\ForEach{$\beta \in S_n$} 
 			{Extend  rectangle $B$ by $\beta$ and check the Latin rectangle condition.\\
 			Check the orthogonality of $(A,B)$.\\
 			\ForEach{ $\gamma \in S_n$} 
 				{Extend  rectangle $C$ by $\gamma$ and check the Latin rectangle condition.\\
 				Check the orthogonality  of  $(A,C)$ and of $(B,C)$.\\
				If all checks are positive, add extended triple to $L$.
 			}
 		}
	}  	
}

\end{algorithm}
{\ }\\

\subsection{Isotopism Rejection}
\label{algo2}

Algorithm~\ref{algo1_ext} may create many isotopic triples of MOLR and the aim of our 
second step is to keep exactly one representative for each class of 
isotopic triples of MOLR, and to output that representative in normalized form. As 
the representative of an isotopism class we will choose the triple which is highest 
in the lexicographic order. The main idea for the isotopism rejection step is thus to 
check whether it is possible to transform a triple $(A,B,C)$ of MOLR to an isotopic 
triple with higher lexicographic order, and if that is possible, the triple $(A,B,C)$ 
is discarded. This test is applied to each generated triple in order to determine 
whether it should be output or not.  

The isotopism rejection is based on all six types of permutations of a 
triple of MOLR of size $k\times n$, namely permuting rectangles, permuting rows, 
permuting columns, and permuting symbols separately in each rectangle. 
Therefore, in general, it requires six nested loops and $3!\times k! \times n! 
\times n!^3$ steps.



We use the concept of normalized triples of MOLR to reduce the complexity of 
this algorithm. First, the first rows
are transformed to identity permutations by permuting symbols. 
As this transformation is unique once the column permutation 
$\sigma$ is chosen, we skip the loops with symbol permutations.

Second, since the Latin rectangles in a normalized triple of MOLR are ordered 
(condition (S2)), it is sufficient to choose which of the rectangles is the 
first one (three options) and reorder the last $k-1$ rows to satisfy condition 
(S3). The other two rectangles will be ordered by the second row.

Finally, rows are ordered by the first rectangle, therefore,  we just need to 
choose which of the rows is the first one ($k$ options).  There is no 
restriction for permuting columns, so we keep this loop as it is. In summary, 
by using normalization, we have reduced the number of steps in the algorithm 
to $3\times k\times n!$.

Pseudocode for this step is given in Algorithm~\ref{algo2_iso}.
In the pseudocode, $M_2$ denotes the second row of a matrix $M$, and the 
lexicographically largest permutation in a set $\alpha$ of permutations is 
denoted by $\max\{\alpha\}$.

\bigskip

\begin{algorithm}[H]
\label{algo2_iso}
\caption{Isotopism rejection of a list of triples of MOLR.}
  \KwIn{List $L$ of $k\times n$ triples of MOLR. }
  \KwOut{List $L^\prime$ of normalized non-isotopic $k\times n$ triples of 
	MOLR.}
\Begin{
	$L^\prime=\emptyset$.\\
	\ForEach {$(A,B,C) \in  L$ }
 	{\ForEach {$D \in \{A,B,C\}$} 
 		{\ForEach{$i \in\{1,2,\dots ,k\}$} 
 			{\ForEach{$\sigma \in S_n$} 
 				{
 				 $(X,Y,Z):=(A,B,C)$.\\
 				 Apply column permutation $\sigma$ for the rectangles $X,Y,Z$.\\
 				 Apply symbol permutation such that the $i$-th rows are the identity permutations.\\
 				 Order rows such that the $i$-th rows are the first rows, and the
			 	 other rows of $D$ are in reverse order.\\

 				 \If { $D_2=\max\{X_2,Y_2,Z_2\}$ }
 				 	{
 				 	 Let $A^\prime := D $. \\
 				 	 Let $B^\prime$ be the rectangle with the second largest second row over all $(X,Y,Z)$.\\
 				 	 Let $C^\prime$ be the rectangle with the smallest second row over all $(X,Y,Z)$.\\
 				 	 \If{$(A^\prime,B^\prime,C^\prime)>(A,B,C)$}
 				 	 	{\GOTO 3 and continue with the next triple of the {\bf foreach} loop. 				 	 	}
 				 	}
 				}
 			}   	
	  }
	Add $(A,B,C)$ to $L^\prime$.\\
  }
}

\end{algorithm}

\end{document}